\documentclass[10pt]{amsart}

\usepackage{graphicx, xcolor}
\usepackage{amssymb}
\usepackage{bbm}
\usepackage{amsmath,mathtools,enumerate}
\usepackage{amsthm}
\usepackage{stmaryrd}
\usepackage{tikz-cd}
\usepackage{faktor}
\usepackage{hyperref}

\newcommand{\C}{\mathbb{C}} 
\newcommand{\R}{\mathbb{R}} 
 
\newcommand{\Z}{\mathbb{Z}} 
\newcommand{\N}{\mathbb{N}} 

\renewcommand{\O}{\mathcal{O}}

\newcommand{\G}{\mathcal{G}}

\newcommand{\K}{\mathcal {K}}

\newcommand{\id}{\mathrm{id}}

\newcommand{\dom}{\mathrm{dom}}
\newcommand{\ran}{\mathrm{ran}}

\theoremstyle{definition}

\newtheorem{thm}{Theorem}[section]
\newtheorem{lem}[thm]{Lemma}
\newtheorem{prop}[thm]{Proposition}
\newtheorem{dfn}[thm]{Definition}

\newtheorem{rmk}[thm]{Remark}

\numberwithin{equation}{section}

\title[Principal Groupoid Models for Stable UCT Kirchberg Algebras]{Principal Groupoid Models for Stable UCT Kirchberg Algebras}

\author{Samuel Evington}
\address{Samuel Evington, Mathematical Institute, University of M\"unster, Ein\-stein\-strasse 62, 48149 M\"unster, Germany}
\email{evington@uni-muenster.de}

\author{Philipp Sibbel}
\address{Philipp Sibbel, Mathematical Institute, University of M\"unster, Ein\-stein\-strasse 62, 48149 M\"unster, Germany}
\email{philipp.sibbel@uni-muenster.de}

\thanks{Funded by the Deutsche Forschungsgemeinschaft (DFG, German Research Foundation) under Germany’s Excellence Strategy – EXC 2044/2 – 390685587, Mathematics Münster – Dynamics – Geometry – Structure; the Deutsche Forschungsgemeinschaft (DFG, German Research Foundation) – Project-ID 427320536 – SFB 1442; and ERC Advanced Grant 834267 - AMAREC}

\begin{document}
	
	\begin{abstract}
		We show that every stable UCT Kirchberg algebra has a principal étale groupoid model, and thus contains a C$^*$-diagonal. 
		Every unital UCT Kirchberg algebra $A$ for which $[1_A]_0$ has infinite order in $K_0(A)$ is also covered by our methods. In particular, we obtain a principal étale groupoid model for the Cuntz algebra $\mathcal{O}_\infty$. 
	\end{abstract}
	
	\maketitle
	
	\section*{Introduction}
	\numberwithin{equation}{section}
	\renewcommand{\thethm}{\Alph{thm}}
	
	Every simple, separable, amenable C$^*$-algebra that absorbs the Jiang--Su algebra $\mathcal{Z}$ tensorially (\cite{JS99}) and satisfies the universal coefficient theorem (UCT) of Rosenberg and Schochet (\cite{RS87}) is isomorphic to the reduced C$^*$-algebra of a twisted groupoid. However, it is an open question whether the twist can be eliminated and if the groupoid can be taken to be principal (i.e.\ the groupoid of an equivalence relation).
	
	In the stably finite case, the existence of a twisted groupoid model was established by Xin Li (\cite{li2020every}). His strategy was to use the classification machinery of the Elliott classification programme (see \cite{El95,El96, winter2018structure, WhiteICM, GL23, CGSTW} for an overview) to realise the C$^*$-algebra $A$ as an inductive limit of simpler building blocks and then build a Cartan subalgebra in $A$. By Kumjian--Renault theory, this leads to a twisted groupoid model for $A$ (\cite{kumjian1986c,renault2008cartan}). In this case, it is known that the groupoid can always be taken to be principal, but it is unknown if the twist can be eliminated in general.
	
	In the purely infinite case, the strategy has been to consider C$^*$-algebras associated to graphs and more general graph-like objects such as higher-rank graphs and topological graphs (\cite{spielberg2007graph,katsura2008class,yeend2006topological,li2019cartan, CFH20}). These C$^*$-algebras have natural groupoid models, which can be constructed by considering paths in the graph-like object.
	By the Kirchberg--Phillips theorem (\cite{Ki95,Ph00}), $K$-theory can be used to detect isomorphisms, so it suffices to exhaust all possible values for the $K$-theory. In this case, it is known that a twist is never needed, but it is unknown if the groupoid can be taken to be principal in general.
	
	In this paper, we prove that all  simple, separable, amenable, purely infinite C$^*$-algebras (i.e.\ Kirchberg algebras) that are stable and satisfy the UCT have a groupoid model that is both untwisted and principal.
	
	\begin{thm}\label{thm:stableKirchberg}
		Let $A$ be a stable Kirchberg algebra satisfying the UCT. Then there exists a principal, second-countable, locally compact, Hausdorff, étale groupoid $\G$ with a finite-dimensional unit space such that $A \cong C_r^*(\G)$. 
	\end{thm}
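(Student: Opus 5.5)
By the Kirchberg--Phillips theorem, a stable UCT Kirchberg algebra is determined up to isomorphism by the pair $(K_0(A),K_1(A))$, and every pair of countable abelian groups occurs. So it suffices, for arbitrary countable abelian groups $G_0,G_1$, to build a principal, second-countable, locally compact, Hausdorff, étale groupoid $\G$ with finite-dimensional unit space such that $C_r^*(\G)$ is a stable Kirchberg algebra satisfying the UCT with $K_*(C_r^*(\G))\cong(G_0,G_1)$.

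\textbf{Step 1 (regularity properties).} We arrange for $\G$ to be amenable, minimal, and locally contracting (or purely infinite in the sense of Matui), with non-compact unit space. Amenability gives nuclearity of $C_r^*(\G)$ and, by Tu's theorem, the UCT. Principality makes $\G$ topologically principal, so minimality yields simplicity. Local contractivity yields pure infiniteness. A $\sigma$-compact non-compact unit space, together with simplicity and pure infiniteness, yields stability via Zhang's dichotomy.

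\textbf{Step 2 (construction).} The idea is to start from a standard graph-type model and make it principal without changing $K$-theory. Take a row-finite graph $E$ (or a $2$-graph, if torsion in $K_1$ needs to be realised) whose graph algebra is a stable Kirchberg algebra with the prescribed $K$-theory. Its path groupoid $\G_E$ is not principal, because cycles produce isotropy. To remove the isotropy, form a skew-product/transformation-groupoid construction: let $\G_E$ act on $X\times \partial E$, where $X$ is a finite-dimensional compact space (for instance a Cantor set, or an odometer-type space) on which each loop acts freely, for example through a free minimal action of a suitable group on $X$ driven by a cocycle on $\G_E$. Since the cocycle is nonzero on every cycle, the resulting groupoid $\G$ is principal. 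Its unit space $X\times\partial E$ is finite-dimensional, because $\partial E$ is totally disconnected.

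\textbf{Step 3 (main obstacle: $K$-theory).} The key difficulty is ensuring that $K_*(C_r^*(\G))$ equals $K_*(C^*(E))$, or some other controllable prescribed value. I would try to choose the twisting action so that it is homotopic to the trivial one, for instance by using a Cantor-type $X$ on which the action is given by a cocycle with values in a contractible or $K$-theoretically trivial setting, and then compute $K$-theory with a Pimsner--Voiculescu or Cuntz--Pimsner six-term sequence for the associated topological graph. If matching $C^*(E)$ directly fails, the fallback is to compute $K_*$ of the principal model for a general family of topological graphs with vertex space $X\times E^0$, and show that the outputs exhaust all pairs $(G_0,G_1)$ by varying the graph's adjacency data. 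Via Katsura's theory, this reduces to a cokernel/kernel computation of $1-A^t$ acting on $K^*(X)\otimes\Z^{E^0}$.

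Finally, the hypotheses of Step 1 are checked for this family. Minimality and local contraction are inherited from $E$ being cofinal with loops having exits, together with minimality of the action on $X$. The Kirchberg--Phillips theorem then completes the proof.
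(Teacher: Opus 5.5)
\textbf{Where the proposal holds up.} Your reduction via Kirchberg--Phillips and your checklist in Step~1 are fine. The Zhang-dichotomy route to stability is a legitimate substitute for the paper's product with $\mathcal{R}_\N$. The isotropy-killing mechanism of Step~2 is in fact the paper's mechanism: skew a path groupoid by a free $\Z$-action along the length cocycle.

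\textbf{The gap: Step~3 does not control $K$-theory.} You identify controlling $K$-theory as the main obstacle but do not resolve it, and the concrete choices you suggest fail.
\begin{enumerate}
\item[(a)] If your space $X$ is a Cantor set or an odometer, then $K^0(X)=C(X,\Z)$ enters Katsura's six-term sequence and changes the answer. This is exactly the defect of Cantor-based constructions that the paper sets out to avoid.
\item[(b)] Making the action ``homotopic to the trivial one'' cannot help. Homotopic maps into a totally disconnected space coincide, so a free action there is never homotopic to the identity. In any case the obstruction is $K^*(X)$ itself, not the map induced by the action.
\item[(c)] Your fallback cannot reach all pairs $(G_0,G_1)$. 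If your topological graph $F$ has zero-dimensional vertex space $X\times E^0$, then $K_1(C_0(F^0))=0$. Exactness then embeds $K_1(\mathcal{O}(F))$ into $K_0(C_0(F^0_{rg}))\cong C_c(F^0_{rg},\Z)$, which is torsion-free. So torsion in $K_1$ is never realised.
\item[(d)] The $2$-graph alternative leaves the Katsura framework you invoke, and you give no $K$-theory computation for it.
\end{enumerate}

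\textbf{The missing idea.} Let $\Z$ act freely and minimally on an infinite compact space with the $K$-theory of a point: the Deeley--Putnam--Strung space $Z$ with its minimal homeomorphism $\rho$ (Theorem~\ref{thm:DPS-Theorem}). By K\"unneth, $K_*(C_0(Z\times Y))\cong K_*(C_0(Y))$, so skewing by $\rho$ kills isotropy without touching $K$-theory.

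The paper also puts the prescribed $K$-theory into the vertex space rather than into adjacency data. It takes a space $Y$ of dimension at most three with $K_*(C_0(Y))\cong(G_0,G_1)$ (Blackadar), and a sequence $(y_m)$ meeting every non-empty open subset of $Y$ infinitely often. The topological graph is $E^0=Z\times Y$, $E^1=Z\times Y\times\N$, $d(z,y,m)=(z,y)$, $r(z,y,m)=(\rho(z),y_m)$. This is precisely the skew product by $\rho$ of the graph $(Y,Y\times\N,(y,m)\mapsto y,(y,m)\mapsto y_m)$. The remaining steps go as follows.
\begin{enumerate}
\item[(i)] Every vertex is singular, so Katsura's sequence gives $K_*(\mathcal{O}(E))\cong K_*(C_0(Z\times Y))\cong K_*(C_0(Y))$. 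No kernel/cokernel computation is needed, and torsion in $K_1$ is included.
\item[(ii)] Minimality and contraction are checked by hand.
\item[(iii)] Principality holds because an infinite path records the orbit segment $(\rho^{-i}(z))_{i\ge 1}$ and $\rho$ acts freely.
\item[(iv)] Finite-dimensionality of $\partial E$ needs a separate dimension estimate, since the boundary path space is no longer totally disconnected.
\item[(v)] Stability comes from passing to $\G\times\mathcal{R}_\N$.
\end{enumerate}

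Even with $Z$ in place of your Cantor set, a discrete row-finite base graph would still give only torsion-free $K_1$. Since $K^1(Z)=0$ and $K^0(Z)\cong\Z$, the same exactness argument embeds $K_1$ into a free abelian group. This is why the $K$-theory has to be carried by the space $Y$.
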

	
	Note that, by Kumjian--Renault theory (\cite{kumjian1986c,renault2008cartan}), it follows immediately from Theorem \ref{thm:stableKirchberg} that every stable UCT Kirchberg algebra contains a C$^*$-diagonal in the sense of Kumjian (\cite{kumjian1986c}) by considering the inclusion $C_0(\G^{(0)}) \subseteq  C_r^*(\G)$, where $\G^{(0)}$ denotes the unit space of $\G$. Our main motivation for seeking principal groupoid models is that Cartan subalgebras that are not C$^*$-diagonals automatically have infinite diagonal dimension (\cite{LLW23}) and therefore most likely lie outside the class of those groupoid models that one could reasonably hope to classify by a $K$-theoretic invariant.    
	
	Our work is inspired, at the conceptual level, by Spielberg's construction of groupoid models for stable UCT Kirchberg algebras (\cite{spielberg2007graph}) and at the technical level by Katsura's alternative construction using topological graph C$^*$-algebras (\cite{katsura2008class}). However, these existing constructions lead to groupoid models that are typically only topologically principal rather than principal. Under Kumjian--Renault theory, the resulting inclusions $C_0(\G^{(0)}) \subseteq  C_r^*(\G)$ are Cartan subalgebras but need not be C$^*$-diagonals (see Sections \ref{subsec:groupoids} and \ref{subsec:cartans} for the relevant definitions).
	
	The key idea in our construction is to eliminate the isotropy that arises in Katsura's approach due to cycles in the topological graph (\cite{katsura2008class}). This is achieved by integrating a minimal topological dynamical system into the construction. This approach has its roots in the work of Winter and the authors on C$^*$-diagonals in the Cuntz algebra $\O_n$ for $2 \leq n < \infty$ (see \cite{sibbel2024cantor,ES26,ES26preprint}). 
	
	One important novelty of this paper is the jump from considering Cantor minimal systems to more general topological dynamical systems. This has proven necessary for $K$-theoretic reasons. Indeed, using a Cantor minimal system to eliminate isotropy typically changes the $K$-theory of the groupoid C$^*$-algebra. In \cite{ES26}, the authors were able to correct for this at a later stage of the construction, but this is only viable in certain special cases.
	
	The topological dynamical system that we make use of is due to Deeley--Putnam--Strung (\cite{deeley2018constructing}), who constructed a minimal homeomorphism $\rho$ on an infinite, compact, metric space $Z$ that has the same $K$-theory as a point. The space $Z$ is constructed using the topological properties of odd-dimensional spheres (see Section \ref{subsec:point-like} for an overview) and was used in the construction of groupoid models for the Jiang--Su algebra $\mathcal{Z}$.   
	
	The output of our construction is summarised by the following theorem. 
	\begin{thm}\label{thm:general}
		Let $X$ be a second-countable, locally compact, Hausdorff space. Then there exists a principal, second-countable, locally compact, Hausdorff, étale groupoid $\G$ such that $C_r^*(\G)$ is a UCT Kirchberg algebra satisfying
		\begin{align*}
			K_0(C_r^*(\G)) &\cong K_0(C_0(X)),\\
			K_1(C_r^*(\G)) &\cong K_1(C_0(X)).
		\end{align*}
		If $X$ has finite covering dimension, then the unit space $\G^{(0)}$ has finite covering dimension.
		If $X$ is compact, then the unit space $\G^{(0)}$ is compact, the C$^*$-algebra $C_r^*(\G)$ is unital, and the isomorphism of $K_0$-groups can be chosen to preserve the $K_0$-class of the unit. 
	\end{thm}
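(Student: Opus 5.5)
The plan is to follow Katsura's topological graph construction, modified by a Deeley--Putnam--Strung system $(Z,\rho)$ to remove isotropy.

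\textbf{Step 1: the topological graph.} Start from the data $X$. Following \cite{katsura2008class}, build a topological graph $E=(E^0,E^1,r,s)$ whose vertex and edge spaces are built from $X$ (e.g.\ $E^0=X\times\N$ or $X\sqcup$ a point-like piece, and $E^1$ a disjoint union of copies). The aim is that the Pimsner--Voiculescu six-term sequence for $C^*(E)$ computes
\[
K_*(C^*(E))\cong \operatorname{coker}/\ker\bigl(\id-[r]\circ[s]^{-1}\bigr)\cong K_*(C_0(X)),
\]
and that $E$ is minimal with every vertex reaching enough cycles, so $C^*(E)$ is simple and purely infinite.

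\textbf{Step 2: eliminate isotropy.} The boundary path groupoid of $E$ is only topologically principal, since periodic paths along cycles have isotropy. Replace $E$ by the product graph $F$ with $F^0=E^0\times Z$ and $F^1=E^1\times Z$, with source $s\times\id$ and range $r\times\rho$. A periodic path of period $n$ in the new path space forces a point $z\in Z$ with $\rho^k(z)=z$ for some $k\neq0$ (the net number of traversed edges). Minimality of $\rho$ on the infinite space $Z$ rules this out, so the path groupoid $\G$ is principal. Second countability, local compactness, Hausdorffness and étaleness come from the general theory of topological graph groupoids (Yeend \cite{yeend2006topological}). For the dimension claim, $\dim Z<\infty$ for the Deeley--Putnam--Strung space and $\dim X<\infty$ give a finite-dimensional unit space, since the boundary path space is an inverse limit of finite-dimensional pieces with controlled dimension.

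\textbf{Step 3: $K$-theory and structure.} Compute $K_*(C^*(F))$ by Katsura's exact sequence:
\[
K_*(C_0(F^0_{rg}))\xrightarrow{\id-[\pi_r]}K_*(C_0(F^0))\to K_*(C^*(F)).
\]
Because $K_*(C(Z))\cong K_*(\C)$ and $\rho$ acts trivially on $K$-theory, the Künneth theorem gives $K_*(C_0(E^0\times Z))\cong K_*(C_0(E^0))$. Naturality then shows the maps are identified with those for $E$, so $K_*(C^*(F))\cong K_*(C_0(X))$. I expect this identification, and in particular checking that twisting by $\rho$ does not alter the induced map, to be the main obstacle.

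Simplicity follows from minimality of $F$, which uses minimality of $\rho$ together with that of $E$, and from principality, which gives freeness. Pure infiniteness follows by exhibiting contracting open sets via the cycles of $E$. Nuclearity and the UCT follow because topological graph algebras are nuclear and in the bootstrap class. Finally, amenability of $\G$ together with principality gives $C^*_r(\G)=C^*(F)$.

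\textbf{Step 4: the compact case.} If $X$ is compact, arrange $E^0$ to be compact, so that $\G^{(0)}$ is compact and the algebra is unital. The unit is the class of $1_{C(F^0)}$, and tracking it through the sequence lets one choose the construction so that it maps to $[1_{C(X)}]$.
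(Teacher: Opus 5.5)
Your overall architecture matches the paper's. You skew the range map of a Katsura-type graph by a minimal Deeley--Putnam--Strung homeomorphism, and your principality argument is the paper's: an infinite path has $Z$-coordinates $\rho^{-i}(z)$, so $\sigma_F^k(\mu)=\sigma_F^l(\mu)$ forces $\rho^{k-l}$ to have a fixed point, hence $k=l$; finite boundary paths are handled by length.

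The gap is that you never fix the base graph $E$, and the properties you assert for $F$ do not follow from properties of $E$ and $\rho$ separately. From $(v,z)$ one only reaches vertices $(r(\mu),\rho^{|\mu|}(z))$. So minimality of $F$ needs control over which path lengths join which vertices; for example, a bipartite $E$ skewed by a minimal $\rho$ whose square is not minimal fails to be minimal. More seriously, contracting open sets cannot come simply ``via the cycles of $E$'': the point of $F$ is that lifted cycles no longer close up. For $V=W\times U$, paths of length $n$ with range in $V$ have sources with $Z$-coordinate in $\rho^{-n}(U)$. To get $\overline{V}\subsetneq\bigcup_k d(U_k)$ you must use minimality of $\rho$ and compactness of $Z$ to find $N$ with $\bigcup_{k=1}^N\rho^{-(k+1)}(U)=Z$. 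You then need families $U_k$ of paths of the $N$ different lengths $k+1$ with $U_k\pitchfork U_l=\emptyset$; the paper labels the second edge of every path in $U_k$ by $k$. This idea is missing from your proposal.

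The missing ingredient is the choice of base graph. Take $E^0=X$, $E^1=X\times\N$, $d(x,m)=x$, $r(x,m)=x_m$, where $(x_m)$ meets every non-empty open subset of $X$ infinitely often. Then $F^0=Z\times X$, $F^1=Z\times X\times\N$, $d(z,x,m)=(z,x)$, $r(z,x,m)=(\rho(z),x_m)$, and everything follows quickly:
\begin{itemize}
\item Every vertex reaches $(\rho^n(z),x_m)$ for all $n,m\in\N$, so minimality is immediate.
\item $r^{-1}(V)$ is non-compact for every non-empty open $V$, so $F^0_{rg}=\emptyset$. Katsura's sequence then says $t^0_*\colon K_*(C_0(Z\times X))\to K_*(\mathcal{O}(F))$ is an isomorphism, and K\"unneth with $K_*(C(Z))\cong K_*(\C)$ finishes.
\item The step you flag as the main obstacle therefore never arises. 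Even with regular vertices it is mild: the left action on $C_d(F^1)\cong C_d(E^1)\otimes C(Z)$ is $\pi_r$ tensored with $h\mapsto h\circ\rho$, and a unital endomorphism of $C(Z)$ equals $1$ in $KK(C(Z),C(Z))\cong\Z$.
\item The compact case needs no arranging. $F^0$ is compact exactly when $X$ is, $t^0$ is then unital, and the K\"unneth isomorphism is tensoring with $[1_{C(Z)}]_0$. A vertex space such as $X\times\N$ would not even be compact.
\end{itemize}
Finally, your dimension sketch via an inverse limit is not justified as stated, since $\partial F$ also contains finite paths ending at singular vertices. The paper instead uses $F^\infty\cong Z\times\N^\N$, $F^k\cong Z\times X\times\N^k$, closedness of $\bigcup_{k\le n}F^k$ in $\partial F$, the countable closed sum theorem, and $\dim(A\cup B)\le\dim A+\dim B+1$.
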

	Theorem~\ref{thm:stableKirchberg} follows from Theorem~\ref{thm:general} by selecting the space $X$ to have the same $K$-theory as the stable UCT Kirchberg algebra $A$, stabilising the groupoid $\G$ (to ensure the corresponding C$^*$-algebra is also stable), and then applying the Kirchberg--Phillips theorem (\cite{Ki95,Ph00}) to show that $A \cong C_r^*(\G)$. 
	Theorem~\ref{thm:general} can also be applied to obtain principal groupoid models for a large class of unital UCT Kirchberg algebras. 
	
	\begin{thm}\label{thm:MostUnitalKirchberg}
		Let $A$ be a unital Kirchberg algebra satisfying the UCT. Suppose $[1_A]_0$ has infinite order and generates a direct summand of $K_0(A)$. Then there exists a principal, second-countable, locally compact, Hausdorff, étale groupoid $\G$ with a finite-dimensional unit space such that $A \cong C_r^*(\G)$. In particular, $A$ contains a C$^*$-diagonal.
	\end{thm}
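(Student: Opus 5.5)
We deduce Theorem~\ref{thm:MostUnitalKirchberg} from the compact case of Theorem~\ref{thm:general} together with the Kirchberg--Phillips theorem. The plan has three steps: find a suitable compact space $X$, apply Theorem~\ref{thm:general}, and then match the $K$-theory and unit class.

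\textbf{Step 1: choosing $X$.} Write $K_0(A) \cong \Z[1_A]_0 \oplus G_0$ and $K_1(A) = G_1$. Here $G_0, G_1$ are countable abelian groups, and the splitting is possible because $[1_A]_0$ has infinite order and generates a direct summand. We want a compact, metrisable space $X$ of finite covering dimension with
\begin{align*}
K_0(C(X)) &\cong \Z[1_{C(X)}]_0 \oplus G_0,\\
K_1(C(X)) &\cong G_1,
\end{align*}
such that the first isomorphism sends $[1_{C(X)}]_0$ to the generator of the $\Z$ summand.

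Since $K_0(C(X)) = \Z[1] \oplus \widetilde K^0(X)$ for connected $X$ with a base point, it suffices to find a compact connected, finite-dimensional pointed space $Y$ with reduced $K$-theory $\widetilde K^0(Y) \cong G_0$ and $K^1(Y) \cong G_1$, and to take $X = Y$.

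To build $Y$, realise each countable group by a Moore-space type construction. Write $G$ as an inductive limit of finitely generated groups, each a finite sum of copies of $\Z$ and $\Z/n$. Realise these by finite wedges of spheres and Moore spaces (e.g.\ $S^{2k}$ and $S^{2k}\cup_n e^{2k+1}$ for even degree, with odd-dimensional analogues for $K^1$). Then take an inverse limit of the resulting finite CW complexes, using bonding maps realising the connecting homomorphisms. Continuity of $K$-theory turns the inverse limit of spaces into a direct limit of groups, as required.

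Finite dimensionality is preserved because we may use spheres of bounded dimension (for instance $S^2, S^3$ and the 3- and 4-dimensional Moore spaces). Connectedness is kept by wedging at base points. The one subtlety is that the required maps on $K$-theory must be realised by actual maps between these finite complexes. For wedges of Moore spaces of bounded dimension this can be arranged by cellular maps of the appropriate degrees, possibly after passing to a suspension; this is the standard technique used to realise all $K$-groups by one-dimensional or low-dimensional spaces. This realisation step is where I expect the main obstacle to lie, in particular realising arbitrary homomorphisms between torsion parts with bounded dimension. If a direct construction is awkward, I would instead quote the known existence of finite-dimensional compact metric spaces with prescribed $K$-theory from the literature cited in the construction of the topological-graph models (\cite{katsura2008class}).

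\textbf{Step 2: apply Theorem~\ref{thm:general}.} For this compact $X$, Theorem~\ref{thm:general} gives a principal, second-countable, locally compact, Hausdorff, étale groupoid $\G$ with the following properties: its unit space is compact and finite-dimensional, $C_r^*(\G)$ is a unital UCT Kirchberg algebra, and $K_*(C_r^*(\G)) \cong K_*(C(X))$ with $[1]_0 \mapsto [1_{C(X)}]_0$.

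\textbf{Step 3: match invariants and conclude.} Composing the isomorphisms from Steps 1 and 2 gives $(K_0(C_r^*(\G)), [1]_0, K_1(C_r^*(\G))) \cong (K_0(A), [1_A]_0, K_1(A))$. By the unital Kirchberg--Phillips theorem, $A \cong C_r^*(\G)$.

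Finally, since $\G$ is principal, Kumjian--Renault theory shows that $C(\G^{(0)}) \subseteq C_r^*(\G)$ is a C$^*$-diagonal, which gives the last assertion.
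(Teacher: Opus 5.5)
Your Steps 2 and 3 are exactly the paper's argument: the compact case of Theorem~\ref{thm:general}, then unital Kirchberg--Phillips, then Kumjian--Renault. So everything rests on Step 1, and as written Step 1 is not established.

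The sketch has concrete errors. First, $S^{2k}\cup_n e^{2k+1}$ has $\widetilde K^0=0$ and $K^1\cong\Z/n$, because its torsion lives in $H^{2k+1}$; it realises torsion in the wrong degree. Second, with the low-dimensional blocks you list, not every connecting map is realisable. Any map $S^3\cup_m e^4\to S^2$ must be null on $S^3$, since $\pi_3(S^2)\cong\Z$ is torsion-free. It therefore factors up to homotopy through a map $S^4\to S^2$, which is zero on $\widetilde K^0$. Hence it induces zero from $\widetilde K^0(S^2)\cong\Z$ to $\widetilde K^0(S^3\cup_m e^4)\cong\Z/m$, and a surjection $\Z\to\Z/m$ cannot occur as a connecting map.

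This can be repaired by working in the stable range. For example, use $S^4$ and $S^3\cup_n e^4$ for $K^0$, and $S^5$ and $S^4\cup_n e^5$ for $K^1$, together with the universal coefficient sequence for maps of Moore spaces. That is genuine work you have not done, and your fallback citation is too vague to close the step.

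The missing idea is much simpler, and it is what the paper does. By \cite[Corollary 23.10.3]{Bla98} there is a second-countable, locally compact $Y$ with $\dim Y\le 3$, $K_0(C_0(Y))\cong G_0$ and $K_1(C_0(Y))\cong G_1$. Take $X$ to be the one-point compactification of $Y$, or $Y\sqcup\{*\}$ if $Y$ is already compact. Then $C(X)$ is the unitisation of $C_0(Y)$. Hence $K_0(C(X))=\Z[1_{C(X)}]_0\oplus K_0(C_0(Y))$ and $K_1(C(X))\cong K_1(C_0(Y))$, the unit automatically generates the free summand, and $\dim X<\infty$.

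No connectedness or base-point bookkeeping is needed. Connectedness is not needed in your set-up either, since any point evaluation splits off $\Z[1]$. With this $X$ in place, your Steps 2 and 3 go through verbatim.
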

	The $K$-theoretic restriction in Theorem~\ref{thm:MostUnitalKirchberg} is necessary and sufficient for there to be a compact space $X$ such that $(K_0(A), [1_A]_0) \cong (K_0(C(X)), [1_{C(X)}]_0)$ and $K_1(A) \cong K_1(C(X))$. 
	We obtain Theorem~\ref{thm:MostUnitalKirchberg} by applying Theorem~\ref{thm:general} with this space $X$ and using the unital case of the Kirchberg--Phillips theorem. 
	
	As a special case of Theorem~\ref{thm:MostUnitalKirchberg}, we obtain the first example of a principal groupoid model for the Cuntz algebra $\mathcal{O}_\infty$. 
	In this case, the compact space $X$ used in the construction is just the one-point space, and we are able to identify the unit space of the resulting groupoid.
	
	\begin{thm}\label{thm:O_infinity}
		There exists a principal, second-countable, locally compact, Hausdorff, étale groupoid $\G_\infty$ such that 
        $C_r^*(\G_\infty) \cong \O_\infty$. 
		In particular, $\O_\infty$ contains a C$^*$-diagonal. Moreover, the unit space of $\G_\infty$ is the product of a Cantor space and a Deeley--Putnam--Strung space and has dimension at most three.
	\end{thm}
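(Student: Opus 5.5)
We will deduce Theorem~\ref{thm:O_infinity} from the compact case of Theorem~\ref{thm:general}, applied to the one-point space $X=\{\mathrm{pt}\}$, together with the unital Kirchberg--Phillips theorem.

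First, Theorem~\ref{thm:general} applied to $X=\{\mathrm{pt}\}$ produces a principal, second-countable, locally compact, Hausdorff, étale groupoid $\G_\infty$. Its reduced C$^*$-algebra $C_r^*(\G_\infty)$ is a unital UCT Kirchberg algebra with
\[
(K_0(C_r^*(\G_\infty)),[1]_0)\cong (K_0(\C),[1]_0)=(\Z,1), \qquad K_1(C_r^*(\G_\infty))\cong K_1(\C)=0 .
\]
The Cuntz algebra $\O_\infty$ is a unital UCT Kirchberg algebra with the same invariant, namely $(K_0(\O_\infty),[1]_0)=(\Z,1)$ and $K_1(\O_\infty)=0$. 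The unital Kirchberg--Phillips theorem (\cite{Ki95,Ph00}) therefore gives $C_r^*(\G_\infty)\cong\O_\infty$.

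The C$^*$-diagonal then follows from Kumjian--Renault theory (\cite{kumjian1986c,renault2008cartan}). Since $\G_\infty$ is principal, the inclusion $C(\G_\infty^{(0)})\subseteq C_r^*(\G_\infty)\cong\O_\infty$ is a C$^*$-diagonal.

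The remaining claim concerns the unit space, and this is the main obstacle. It cannot be read off from the statement of Theorem~\ref{thm:general}; it requires opening up the construction behind it. My plan is to trace the construction with $X$ a point. I expect the unit space to be built as a product of three factors:
\begin{itemize}
\item a Katsura-type path space over $X$, which for $X=\{\mathrm{pt}\}$ should be a totally disconnected space, coming from infinite paths in a graph with finitely many vertices;
\item the space $X$ itself (or a cover of it), which here is a point;
\item the Deeley--Putnam--Strung space $Z$ carrying the minimal homeomorphism $\rho$ used to kill isotropy.
\end{itemize}
I then need to check two things. First, the totally disconnected factor has no isolated points, hence is a Cantor space; this holds if every vertex of the graph emits at least two edges, which is needed anyway for pure infiniteness. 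Second, $\dim Z\le 3$, which I expect to follow from the Deeley--Putnam--Strung construction built on odd-dimensional spheres, with the three-sphere as the relevant case.

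Granting these, the product formula for covering dimension of compact metric spaces gives
\[
\dim(\text{Cantor}\times Z)\le \dim(\text{Cantor})+\dim Z = 0+\dim Z\le 3 .
\]
If the construction turns out to involve an extra non-compact factor for the purposes of stabilization, I would note that in the compact case of Theorem~\ref{thm:general} no stabilization is needed. So the unit space should be exactly this product.
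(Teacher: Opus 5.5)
Your argument for the first two assertions is correct and matches the paper. The paper formally goes through Theorem~\ref{thm:MostUnitalKirchberg}, but notes that this amounts to applying Theorem~\ref{thm:general} to $X=\{*\}$ and then the unital Kirchberg--Phillips theorem. The C$^*$-diagonal then comes from Theorem~\ref{thm:renault-cartan}.

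The gap is the final assertion about the unit space, which your proposal does not prove: you describe what you expect the construction to look like and then proceed ``granting these''. Your sketch also misdescribes the construction. The unit space is the boundary path space $\partial E$ of the topological graph with $E^0=Z\times X$, $E^1=Z\times X\times\N$, $d(z,x,m)=(z,x)$ and $r(z,x,m)=(\rho(z),x_m)$. Every vertex receives infinitely many edges, so every vertex is singular. This is exactly what gives $E^0_{rg}=\emptyset$, on which the $K$-theory computation in Theorem~\ref{thm:general} relies. Consequently $\partial E=E^*\sqcup E^\infty$ contains all finite paths as well as the infinite ones. So the totally disconnected factor is not the infinite-path space of a graph in which each vertex emits finitely many (at least two) edges; such a graph would have regular vertices. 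Your ``at least two edges'' criterion is therefore not the relevant reason for the absence of isolated points.

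The missing step is an explicit identification. With $X=\{*\}$, the condition $d(e_i)=r(e_{i+1})$ forces every path of length $k\in\N\cup\{\infty\}$ with range $(z,*)$ to be $((\rho^{-i}(z),*,m_i))_{i=1}^k$. Hence $h(z,(m_i)_{i=1}^k)=((\rho^{-i}(z),*,m_i))_{i=1}^k$, extended by the identity on paths of length zero, is a bijection $Z\times\partial F\to\partial E$. Here $F$ is the graph with a single vertex and loops indexed by $\N$, the standard graph model of $\O_\infty$.

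Lemma~\ref{lem:Kumjian-Li} shows that $h$ is a homeomorphism:
\begin{enumerate}
\item[(i)] condition (i) is convergence of the $Z$-coordinate;
\item[(ii)] given this, condition (ii) amounts to eventual agreement of the initial labels;
\item[(iii)] condition (iii) reduces to the same condition in $\partial F$, since every compact subset of $E^1\cong Z\times\N$ lies in $Z\times S$ for some finite $S\subseteq\N$.
\end{enumerate}

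Finally, $\partial F$ (finite and infinite words in $\N$, with this topology) is a Cantor space: compact, metrisable, totally disconnected and without isolated points. For instance, a finite word $w$ is the limit of the words $wn$ as $n\to\infty$ by condition (iii). With this identification in hand, your estimate $\dim(\partial F\times Z)\le\dim(Z)\le 3$ goes through. The paper in fact obtains $\dim(\partial E)=\dim(Z)\in\{2,3\}$, using $d=3$ in Theorem~\ref{thm:DPS-Theorem}.
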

	
	This positive result for $\O_\infty$ is of particular interest as neither the construction of Brown--Clark--Sierakowski--Sims  (\cite{brown2016purely}) nor the construction of principal groupoid models for the Cuntz algebras $\O_n$ for $2 \leq n < \infty$ due to Winter and the authors (\cite{sibbel2024cantor,ES26}) can handle the case of $\O_\infty$. Combining \cite[Theorem A]{ES26} with Theorem~\ref{thm:O_infinity}, every Cuntz algebra contains a C$^*$-diagonal.
	
	The existence of a principal groupoid model for $\O_\infty$ can be leveraged to construct additional principal groupoid models for Kirchberg algebras. Indeed, starting with some principal groupoid model $\G$ for the Kirchberg algebra $A$ with a finite-dimensional unit space $\G^{(0)}$, one can obtain a countably infinite family of principal groupoid models by taking products of $\G$ with finitely many copies of the groupoid $\G_\infty$ since $A \cong A \otimes \O_\infty$ and $\O_\infty \cong \O_\infty \otimes \O_\infty$. The groupoids in such a family can be distinguished by considering the dimension of their unit spaces. (As experts will be aware, a similar trick can also be performed using a principal groupoid model of the Jiang--Su algebra $\mathcal{Z}$ instead of $\O_\infty$; see for example \cite[Proposition 5.1]{li2019cartan}.)
	
	We can also consider the product of $\G_\infty$ with a (twisted) principal groupoid model $\G_B$ of a stably finite C$^*$-algebra $B$. The result is a (twisted) principal groupoid model for the purely infinite C$^*$-algebra $B \otimes \O_\infty$. Applying this observation to the results of Xin Li on (twisted) principal groupoid models for stably finite, unital C$^*$-algebras covered by the Elliott classification programme (\cite{li2020every}), we obtain the following result. 
	\begin{thm}\label{thm:EvenMoreUnitalKirchberg}
		Let $A$ be a unital Kirchberg algebra satisfying the UCT. Suppose $[1_A]_0$ has infinite order in $K_0(A)$. Then there exists a principal, second-countable, locally compact, Hausdorff, étale groupoid $\G$ with a finite-dimensional unit space and a twist $\Sigma$ such that $A \cong C_r^*(\G, \Sigma)$. In particular, $A$ contains a C$^*$-diagonal.
	\end{thm}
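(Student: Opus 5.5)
The plan follows the strategy outlined just before the statement: write $A \cong B \otimes \O_\infty$ with $B$ a stably finite, unital, classifiable C$^*$-algebra, and then take the product groupoid $\G_B \times \G_\infty$. This product carries a twist, and $\G_B$ comes from Li's theorem (\cite{li2020every}).

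\textbf{Step 1 (choosing $B$).} Set $G_0 = K_0(A)$ and $G_1 = K_1(A)$, and let $u = [1_A]_0$, which has infinite order. We want a simple, separable, unital, nuclear, $\mathcal{Z}$-stable UCT algebra $B$ with
\[
(K_0(B), [1_B]_0) \cong (G_0, u), \qquad K_1(B) \cong G_1.
\]
By the range-of-invariant results in the Elliott programme, it suffices to put a simple ordered group structure on $G_0$ with order unit $u$. Such a structure exists because $u$ has infinite order. For instance, choose a homomorphism $\phi\colon G_0 \to \mathbb{Q}$ with $\phi(u) = 1$. This can be done by first defining $\phi$ on $\mathbb{Z}u$ and then extending, since $\mathbb{Q}$ is divisible. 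Then declare
\[
G_0^+ = \{0\} \cup \{g : \phi(g) > 0\}.
\]
This gives a simple dimension-group-like ordered group (weakly unperforated, with a single state), and $u$ is an order unit. Taking the trace simplex to be a point with pairing $\phi$, the classification range results give such a $B$, stably finite and with a unique trace.

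\textbf{Step 2 (assembling the groupoid).} Li's theorem gives a twisted groupoid model $(\G_B, \Sigma_B)$ for $B$. Here $\G_B$ is principal, second-countable, locally compact, Hausdorff and étale, and we need its unit space to be finite-dimensional. I expect Li's construction to give this; otherwise I would cite the dimension bounds in his paper. Theorem~\ref{thm:O_infinity} gives $\G_\infty$, which is principal and has a unit space of dimension at most three. Set
\[
\G = \G_B \times \G_\infty, \qquad \Sigma = \Sigma_B \times \G_\infty,
\]
the twist pulled back along the first coordinate. Then $\G$ is principal, since a product of principal groupoids is principal. It is also étale, second-countable and locally compact Hausdorff. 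Its unit space has dimension at most $\dim \G_B^{(0)} + 3$, because covering dimension is subadditive for products of such metrizable spaces. Next, $C_r^*(\G,\Sigma) \cong C_r^*(\G_B,\Sigma_B) \otimes C_r^*(\G_\infty) \cong B \otimes \O_\infty$, using amenability (or just the standard minimal-tensor identification). This algebra is a unital UCT Kirchberg algebra: it is simple, nuclear and purely infinite because it is $\O_\infty$-absorbing, and the UCT passes to the tensor product. By the Künneth theorem and $K_*(\O_\infty) = K_*(\C)$, its $K$-theory is $(G_0, u, G_1)$. The unital Kirchberg--Phillips theorem then gives $A \cong C_r^*(\G,\Sigma)$. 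Finally, Kumjian--Renault theory shows that $C_0(\G^{(0)})$ is a C$^*$-diagonal, since $\G$ is principal.

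The main obstacle is Step 1. We must check that the range of the invariant for classifiable stably finite unital algebras (for example via Elliott's construction or Gong--Lin--Niu) includes arbitrary countable $G_0$ with such an order, arbitrary $G_1$, and torsion in $G_0$. We must also check that Li's theorem covers this full range, including the finite-dimensionality of the unit space. If the order above causes trouble, I would instead use the order induced from $\phi$ onto $\mathbb{Q}$ with a dense-image modification, or choose several states so that the ordered group is a genuine simple weakly unperforated group.
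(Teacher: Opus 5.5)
Your proposal is correct and follows the paper's proof: both order $G_0$ by strict positivity of a functional $\phi$ with $\phi(u)=1$ (the paper maps into $\R$ via $G_0\otimes_{\Z}\R$, while your divisibility-of-$\mathbb{Q}$ extension works equally well), take a one-point trace simplex, and use Li's construction together with his Theorem 1.2 and Corollary 1.8 to get a twisted principal model of $B$ with finite-dimensional unit space. Both then take the product with $\G_\infty$ and conclude via Künneth, the unital Kirchberg--Phillips theorem and Kumjian--Renault. The ordered group you define is already simple and weakly unperforated, with $u$ an order unit and $\phi$ its only state, so the fallback modifications in your final paragraph are not needed.
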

	The $K$-theoretic restriction on $A$ in Theorem~\ref{thm:EvenMoreUnitalKirchberg} is weaker than in Theorem~\ref{thm:MostUnitalKirchberg}; however, unlike Theorem~\ref{thm:MostUnitalKirchberg}, the construction of Theorem~\ref{thm:EvenMoreUnitalKirchberg} results in a twisted groupoid whenever Xin Li's construction requires one. Note that, if $B$ is unital, then stable finiteness implies that $[1_B]_0$ must have infinite order in $K_0(B)$, so $[1_B \otimes 1_{\O_\infty}]_0$ must have infinite order in $K_0(B \otimes \O_\infty)$. Therefore, the $K$-theoretic restriction in Theorem~\ref{thm:EvenMoreUnitalKirchberg} is inherent to this strategy.
	
	In light of Theorem~\ref{thm:MostUnitalKirchberg}, Theorem~\ref{thm:EvenMoreUnitalKirchberg}, and the existence of principal groupoid models for all Cuntz algebras (by \cite{ES26} and Theorem~\ref{thm:O_infinity}), we expect that every unital UCT Kirchberg algebra has a principal groupoid model. 
	
	\section{Preliminaries}
	\renewcommand{\thethm}{\arabic{thm}}
	\numberwithin{thm}{section}
	
	\subsection{Kirchberg algebras}
	A simple C$^*$-algebra $A$ is said to be \emph{purely infinite} if $A \not\cong \C$ and for any $a,b \in A_+$ with $b \neq 0$ there exists $r \in A$ such that $a = r^*br$. A \emph{Kirchberg algebra} is a simple, separable, amenable, purely infinite C$^*$-algebra. The most famous examples of Kirchberg algebras are the Cuntz algebras $\O_n$ for $2 \leq n \leq \infty$ (\cite{cuntz1977simple}). For $n$ finite, $\O_n$ is defined as the universal C$^*$-algebra generated by $n$ isometries $s_1,\ldots,s_n$ with $\sum_{i=1}^n s_is_i^* = 1$, and $\O_\infty$ is defined as the universal C$^*$-algebra generated by a sequence of isometries $(s_i)_{i=1}^\infty$ with pairwise orthogonal ranges.
	
	Kirchberg proved that a simple, separable, amenable C$^*$-algebra $A$ is purely infinite if and only if $A \cong A \otimes \O_\infty$ (\cite{Ki95}). Moreover, by the Kirchberg--Phillips theorem (\cite{Ki95,Ph00}), Kirchberg algebras that satisfy the UCT are classified by their $K$-theory. More precisely, two unital UCT Kirchberg algebras $A$ and $B$ are isomorphic if and only if $((K_0(A),[1_A]_0), K_1(A)) \cong ((K_0(B),[1_B]_0), K_1(B))$, and stable UCT Kirchberg algebras $A$ and $B$ are isomorphic if and only if $(K_0(A), K_1(A)) \cong (K_0(B), K_1(B))$; see for example \cite[Theorem 8.4.1]{rordam2002classification}. These two results cover all cases as Kirchberg algebras are either unital or stable (\cite{Zhang92}). It is also known that any pair of countable abelian groups can be realised as the $K$-theory of a UCT Kirchberg algebra $A$ and, in the unital case, any group element can be realised as $[1_A]_0$; see for example \cite[Section 4.3]{rordam2002classification}.
	
	\subsection{Topological groupoids and their C*-algebras}\label{subsec:groupoids}
	
	We assume the reader is familiar with the basic theory of topological groupoids and their associated C$^*$-algebras; see for example \cite{renault1980groupoid, renault2008cartan}. The main purpose of this subsection is to fix our notation.
	
	In this paper, all groupoids considered will be locally compact and Hausdorff.
	We write $\G^{(0)}$ for the unit space of a topological groupoid $\G$, and we denote the range and source maps by $r,s: \G \to \G^{(0)}$, respectively. We recall that $\G$ is said to be \emph{étale} if $r$ and $s$ are local homeomorphisms.
	
	For $x\in \G^{(0)}$, the subgroup $\G_x^x = \{g\in \G : r(g)=s(g)=x \}$ is called the \emph{isotropy subgroup at $x$}. 
	The groupoid $\G$ is said to be \emph{principal} if $\G_x^x=\{x\}$ for all $x\in \G^{(0)}$ and is said to be \emph{topologically principal} if the set of all $x\in \G^{(0)}$ with $\G_x^x=\{x\}$ is dense in $\G^{(0)}$.
	
	We write $C_r^*(\G,\Sigma)$ for the reduced C$^*$-algebra of the locally compact, Hausdorff, étale groupoid $\G$ with respect to the twist $\Sigma$; see \cite[Section 4]{renault2008cartan} for details of the construction. The C$^*$-algebra $C_r^*(\G,\Sigma)$ contains a canonical abelian subalgebra isomorphic to $C_0(\G^{(0)})$.
	
	\subsection{Kumjian--Renault theory}\label{subsec:cartans}
	
	Cartan subalgebras and C$^*$-diagonals are special classes of abelian subalgebras with close connections to groupoid C$^*$-algebras thanks to the work of Kumjian (\cite{kumjian1986c}) and Renault (\cite{renault2008cartan}).  We first recall the relevant definitions. 
	
	\begin{dfn}[{\cite[Definition 5.1]{renault2008cartan}}]
		Let $A$ be a C$^*$-algebra. A non-degenerate sub-C$^*$-algebra $D \subseteq A$ is called a \emph{Cartan subalgebra} if 
		\begin{enumerate}
			\item[(i)] $D$ is a maximal abelian sub-C$^*$-algebra of $A$,
			\item[(ii)] $A$ is generated by $\mathcal{N}_A(D) = \{n \in A : n^*Dn \subseteq D \text{ and } nDn^* \subseteq D\}$, and
			\item[(iii)] there exists a faithful conditional expectation $A \rightarrow D$.
		\end{enumerate}
		Moreover, $D \subseteq A$ is called a C$^*$-\textit{diagonal} if additionally
		\begin{enumerate}
			\item[(iv)] every pure state on $D$ has a unique extension to a pure state on $A$.
		\end{enumerate}
	\end{dfn}
	
	We now provide a formal statement of Kumjian--Renault theory.
	
	\begin{thm}[{\cite{kumjian1986c,renault2008cartan}}] \label{thm:renault-cartan} There is a one-to-one correspondence between Cartan subalgebras and twisted étale groupoids. More precisely: 
		\begin{enumerate}[(i)]
			\item Let $\G$ be a topologically principal, second-countable, locally compact, Hausdorff, étale groupoid and let $\Sigma$ be a twist over $\G$. Then $C_r^*(\G,\Sigma)$ is separable and $C_0(\G^{(0)}) \subseteq C_r^*(\G,\Sigma)$ is a Cartan subalgebra. Moreover, $C_0(\G^{(0)}) \subseteq C_r^*(\G,\Sigma)$ is a C$^*$-diagonal if and only if $\G$ is principal.
			\item Let $A$ be a separable C$^*$-algebra and $D \subseteq A$ be a Cartan subalgebra. Then there is a topologically principal, second-countable, locally compact, Hausdorff, étale groupoid $\G$ and a twist $\Sigma$ such that the inclusion $D \subseteq A$ is isomorphic to $C_0(\G^{(0)}) \subseteq C_r^*(\G,\Sigma)$. Moreover, the twisted groupoid $(\G,\Sigma)$ is uniquely determined up to isomorphism by the inclusion $D \subseteq A$.  
		\end{enumerate}
	\end{thm}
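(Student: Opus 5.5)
Since Theorem~\ref{thm:renault-cartan} is quoted from \cite{kumjian1986c,renault2008cartan}, the plan is to reconstruct the two directions of Renault's argument rather than find a new route. For (i), I start from the standard facts about $C_r^*(\G,\Sigma)$: it is the completion of $C_c(\G,\Sigma)$ of $\mathbb{T}$-equivariant sections. Second-countability gives a countable dense set of compactly supported sections, so separability is immediate. Restriction of sections to $\G^{(0)}$ defines a faithful conditional expectation $E: C_r^*(\G,\Sigma) \to C_0(\G^{(0)})$; faithfulness holds because we are in the reduced completion, which is (iii). Sections supported on open bisections are normalisers of $C_0(\G^{(0)})$, and they span a dense subalgebra, which gives (ii).

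The substantive part of (i) is maximal abelianness (i). If $a$ commutes with $C_0(\G^{(0)})$, then its image $j(a)$ as a continuous section on $\G$ satisfies $f(r(g))j(a)(g) = j(a)(g)f(s(g))$ for all $f$. Hence $j(a)$ vanishes off the isotropy $\bigcup_x \G_x^x$. By topological principality, the interior of the isotropy is $\G^{(0)}$ (this uses étaleness and Hausdorffness). Continuity then forces $j(a)$ to be supported on $\G^{(0)}$, so $a \in C_0(\G^{(0)})$. For the diagonal statement, I describe the extensions of the point evaluation $\mathrm{ev}_x$: every state extending it factors through the representation on $\ell^2(\G_x)$, and the extensions are parametrised by states of the twisted group algebra of $\G_x^x$. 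These are unique exactly when $\G_x^x$ is trivial. Conversely, a nontrivial isotropy element yields two distinct pure extensions. This is Kumjian's unique-extension characterisation.

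For (ii), given a Cartan pair $D \subseteq A$ with $D \cong C_0(X)$, each normaliser $n$ induces a partial homeomorphism $\alpha_n$ from $\{x : n^*n(x) > 0\}$ onto $\{x : nn^*(x) > 0\}$. I define $\G$ as the groupoid of germs $[x,\alpha_n,y]$ and $\Sigma$ as the equivalence classes of pairs $(n,x)$ modulo $D$-multiples. This gives a twisted groupoid, and the map $n \mapsto$ (section $[x,\alpha_n,y] \mapsto$ the value determined by evaluation of $E(m^*n)$) extends to an isomorphism $A \cong C_r^*(\G,\Sigma)$ carrying $D$ onto $C_0(\G^{(0)})$. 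Hausdorffness and topological principality of $\G$ must be deduced from the Cartan axioms.

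I expect the main obstacle to be Hausdorffness and the proof that the comparison map is isometric and injective. The plan is to use the faithful conditional expectation, which is unique here because $D$ is maximal abelian. With it, show that germs of normalisers agreeing at a point are separated by $E$, and that the GNS representations from pure states of $D$ composed with $E$ recover the regular representations on $\ell^2(\G_x)$. Uniqueness of $(\G,\Sigma)$ then follows because the construction depends only on the pair $D \subseteq A$ and its normalisers.
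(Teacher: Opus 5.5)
The paper does not prove this statement; it is quoted from Kumjian and Renault. Your outline is essentially a reconstruction of Renault's argument: the restriction expectation, normalisers supported on bisections, effectiveness giving maximal abelianness, and the Weyl groupoid and Weyl twist. Part (i) and the existence half of (ii) are on the right track as a sketch.

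\textbf{The gap: uniqueness in (ii).} Saying that the Weyl construction ``depends only on the pair'' shows only that the Weyl twisted groupoid $(\G_W,\Sigma_W)$ is an isomorphism invariant of $D\subseteq A$. Uniqueness asserts more: if $(\mathcal{H},\Lambda)$ is \emph{any} topologically principal twisted groupoid with $C_0(\mathcal{H}^{(0)})\subseteq C_r^*(\mathcal{H},\Lambda)$ isomorphic to $D\subseteq A$, then $(\mathcal{H},\Lambda)\cong(\G_W,\Sigma_W)$. For this you need a reconstruction result: applying the Weyl construction to $C_0(\mathcal{H}^{(0)})\subseteq C_r^*(\mathcal{H},\Lambda)$ returns $(\mathcal{H},\Lambda)$.

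The key lemma, and the place where topological principality is really used, is the following. An element $a\in C_r^*(\mathcal{H},\Lambda)$ normalises $C_0(\mathcal{H}^{(0)})$ if and only if its open support $\{\gamma : j(a)(\gamma)\neq 0\}$, viewed in $\mathcal{H}$, is a bisection. Given this, send $\sigma\in\Lambda$ to the class $[r(\sigma),n,s(\sigma)]$ of any normaliser $n$ supported in an open bisection with $j(n)(\sigma)>0$, and check that this is a well-defined isomorphism of twists. Injectivity at the level of $\mathcal{H}$ again uses effectiveness: if open bisections through $g\neq h$ with $s(g)=s(h)$ induced the same germ at $s(g)$, then $h^{-1}g\notin\mathcal{H}^{(0)}$ would be an interior point of the isotropy. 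Without this step, nothing in your argument rules out two non-isomorphic twisted groupoids giving the same Cartan pair.

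\textbf{Smaller points.}

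For Hausdorffness of $\G_W$, uniqueness of $E$ is not the relevant fact. It suffices that, for each normaliser $n$, the interior of $\mathrm{Fix}(\alpha_n)$ is relatively closed in $\dom(\alpha_n)$. This follows from the identity $\{y : E(n)(y)\neq 0\}=\operatorname{int}\mathrm{Fix}(\alpha_n)$ together with $|E(n)|^2=n^*n$ on that set. The inclusion $\subseteq$ uses only $D$-bimodularity of $E$: if $\alpha_n(y)\neq y$, pick $d\in D$ with $d(y)=1$ and $dnd=0$. The inclusion $\supseteq$ uses maximal abelianness: if $d\in D$ is supported inside $\operatorname{int}\mathrm{Fix}(\alpha_n)$, then $nd$ commutes with $D$, so $nd\in D$ and $nd=E(n)d$.

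In the Weyl twist, the $D$-multiples must be taken positive at the base point. Otherwise the circle is quotiented away and you recover $\G_W$ rather than $\Sigma_W$.

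In (i), you do not need the claim that all extensions of $\mathrm{ev}_x$ factor through $\ell^2(\G_x)$ and are parametrised by states of a twisted group algebra of $\G_x^x$; which completion appears there is delicate. It suffices that:
\begin{itemize}
\item any extension kills sections supported on bisections missing $\G_x^x$, which gives uniqueness when $\G_x^x$ is trivial;
\item the vectors $\delta_x$ and $(\delta_x+\delta_g)/\sqrt{2}$ give distinct extensions when $g\in\G_x^x\setminus\{x\}$.
\end{itemize}
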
 
	
	We will make frequent use of Kumjian--Renault theory to reformulate groupoid results in the language of Cartan subalgebras and vice versa.
	
	\subsection{Topological graphs}\label{subsec:topological-graphs}
	In this section, we recall the definition and basic properties of topological graphs and their associated C$^*$-algebras.  Our main references are the foundational papers of Katsura on topological graphs (\cite{katsura2004class,katsura2006class2,katsura2006class3,katsura2008class}).
	
	We begin with the definition of a topological graph.
 	
	\begin{dfn}(\cite[Definition 2.1]{katsura2004class})\label{def:top-graph}
		A \emph{topological graph} $E= (E^0, E^1,d,r)$ consists of two locally compact Hausdorff spaces $E^0$ and $E^1$ together with two maps $d,r: E^1 \to E^0$, where $d$ is a local homeomorphism and $r$ is continuous. 
	\end{dfn}
	
	Next, we set up the notation needed to define the C$^*$-algebra of a topological graph (see \cite{katsura2004class,katsura2008class}).
	Let $E = (E^0, E^1,d,r)$ be a topological graph. 
	Set 
	\begin{equation}
		C_d(E^1) = \{\xi \in C(E^1) : \langle \xi, \xi \rangle \in C_0(E^0)\},
	\end{equation}
	where the inner product is defined by 
	\begin{equation}
		\langle \xi, \eta \rangle (x) = \sum_{e \in d^{-1}(x)} \overline{\xi(e)} \eta(e)
	\end{equation}
	for $\xi, \eta \in C_d(E^1)$ and $x \in E^0$. 
	We define a left and a right action of $C_0(E^0)$ on $C_d(E^1)$ by
	\begin{equation}
		(f \xi g) (e) = f(r(e)) \xi(e) g(d(e))
	\end{equation}
	for $f,g \in C_0(E^0), \xi \in C_d(E^1)$, and $e \in E^1$.  
	The left action can be viewed as a $^*$-homomorphism $\pi_r: C_0(E^0) \rightarrow \mathcal{L}(C_d(E^1))$ into the C$^*$-algebra of all adjointable operators on the Hilbert $C_0(E^0)$-module $C_d(E^1)$. 
	
	A vertex $v \in E^0$ is said to be \emph{regular} if there exists a neighbourhood $V$ of $v$ such that $r^{-1}(V) \subseteq E^1$ is compact, and $r(r^{-1}(V)) = V$. The set of all regular vertices forms an open subset $E^0_{rg} \subseteq E^0$. The complement $E_{sg}^0 = E^0 \setminus E_{rg}^0$ is closed and its elements are called \emph{singular} vertices. 
	
	Given $\xi,\eta \in C_d(E^1)$, we write $\theta_{\xi,\eta} \in \mathcal{L}(C_d(E^1))$ for the rank-one operator defined by $\theta_{\xi,\eta} (\zeta) = \xi \langle \eta, \zeta \rangle$ for all $\zeta \in C_d(E^1)$. We recall that the closed linear span of the rank-one operators forms an ideal $\mathcal{K}(C_d(E^1))$ of $\mathcal{L}(C_d(E^1))$. The restriction of the $^*$-homomorphism $\pi_r:C_0(E^0) \rightarrow \mathcal{L}(C_d(E^1))$ to $C_0(E^0_{rg}) \subseteq C_0(E^0)$ is an injective map $C_0(E^0_{rg}) \rightarrow \mathcal{K}(C_d(E^1))$.
	
	We are now in a position to recall the definition of topological graph C$^*$-algebras.
	
	\begin{dfn}\label{dfn:TopologicalGraphAlgebra} (\cite[Definition 1.3]{katsura2008class})
		For a topological graph $E= (E^0, E^1,d,r)$, the C$^*$-algebra $\mathcal{O}(E)$ is the universal C$^*$-algebra generated by the images of a $^*$-homomorphism $t^0:C_0(E^0) \to \mathcal{O}(E)$ and a linear map $t^1:C_d(E^1) \to \mathcal{O}(E)$ satisfying 
		\begin{enumerate}
			\item[(i)]  $t^1(\xi)^*t^1(\eta) = t^0(\langle \xi, \eta \rangle) $ for $\xi, \eta \in C_d(E^1)$,
			\item[(ii)] $t^0(f)t^1(\xi) = t^1 (\pi_r(f)\xi)$ for $f \in C_0(E^0)$ and $\xi \in C_d(E^1)$,
			\item[(iii)] $t^0(f) = \varphi (\pi_r(f))$ for $f \in C_0(E^0_{rg})$,
		\end{enumerate}
		where $\varphi: \K(C_d(E^1)) \to \mathcal{O}(E)$ is the $^*$-homomorphism defined on rank-one operators by $\varphi(\theta_{\xi,\eta}) = t^1(\xi) t^1(\eta)^*$ for $\xi, \eta \in C_d(E^1)$, which is well-defined by (i).
	\end{dfn}
	
	In order to discuss Katsura's sufficient conditions for a topological graph C$^*$-algebra to be a UCT Kirchberg algebra, we shall need to introduce some notation for paths in a topological graph $E= (E^0, E^1,d,r)$.
	
	For $n \geq 2$, we define the space of paths of length $n$ by
	\begin{equation}
		E^n = \{(e_1,\ldots,e_n) \in (E^1)^n: d(e_i) = r(e_{i+1}) \mbox { for } i=1,\ldots,n-1\}.
	\end{equation}
	We write $E^* = \bigsqcup_{n=0}^\infty E^n$ for the space of all finite paths (including paths of length zero and one).
	We also define the space of infinite paths
	\begin{equation}
		E^\infty = \{(e_i)_{i=1}^\infty \in (E^1)^\N: d(e_i) = r(e_{i+1}) \text{ for } i \in \N\}.
	\end{equation}
	
	The domain and range maps are defined on $E^n$ by $d((e_1,\ldots,e_n)) = d(e_n)$ and $r((e_1,\ldots,e_n)) = r(e_1)$, for $n \geq 2$, and taken to be the identity on $E^0$. The resulting maps $d,r:E^* \rightarrow E^0$ are easily seen to be continuous. Similarly, we define a continuous map $r:E^\infty \rightarrow E^0$ by $r((e_i)_{i=1}^\infty) = r(e_1)$ for all $(e_i)_{i=1}^\infty \in E^\infty$. 
	For a finite or infinite path $\mu$, we denote its length by $|\mu| \in \N_0 \cup \{\infty\}$ and its constituent edges by $\mu_i$ for $i \in \N$ with  $1 \leq i \leq |\mu|$.
	
	We define the \emph{positive orbit space} $\text{Orb}^+(v)$ of $v \in E^0$ by 
	\begin{equation}
		\text{Orb}^+(v) = \{ r(\mu) \in E^0 :  \mu \in E^*, d(\mu)= v \}.
	\end{equation}
	For $v \in E^0$, a \emph{negative orbit} of $v$ is either a finite path $\mu \in E^*$ with $r(\mu)= v$ and $d(\mu) \in E^0_{sg}$, or an infinite path $\mu \in E^\infty$ with $r(\mu )= v$.
	Given $v \in E^0$ and a negative orbit $\mu$ of $v$, the \emph{negative orbit space} $\text{Orb}^-(v,\mu)$ is defined by
	\begin{equation}
		\text{Orb}^-(v,\mu) = \{v\} \cup \{d(\mu_i): i \in \N, 1 \leq i \leq |\mu| \}.
	\end{equation}
	The \emph{orbit space} $\text{Orb}(v,\mu)$ of $v$ with respect to the negative orbit $\mu$ of $v$ then is defined by
	\begin{equation}
		\text{Orb}(v,\mu) = \bigcup_{v^\prime \in \text{Orb}^-(v,\mu)} \text{Orb}^+(v^\prime).
	\end{equation}

	We are now in a position to recall Katsura's notion of \emph{minimality} of a topological graph, which is closely related to the simplicity of the associated C$^*$-algebra.
	
	\begin{dfn}[{\cite[Definition 1.8]{katsura2008class}}]
		A topological graph $E= (E^0,E^1,d,r)$ is said to be \emph{minimal} if $\text{Orb}(v,\mu)$ is dense in $E^0$ for every $v \in E^0$ and every negative orbit $\mu$ of $v$.
	\end{dfn}
	
	Next, we shall review Katsura's notion of a \emph{contracting} topological graph, which is closely related to pure infiniteness of the associated C$^*$-algebra. 
	For this, we introduce some notation.
	Let $n, m \in \N$ with $k= \min\{n,m\}$, $U \subseteq E^n$ and $U^\prime \subseteq E^m$. We define $U \pitchfork U^\prime \subseteq E^k$ by 
	\begin{equation}
		U \pitchfork U^\prime  = (U\vert_k) \cap (U^\prime\vert_k),
	\end{equation} 
	where $U\vert_k = \{(\mu_1,\mu_2,\dots,\mu_k): \mu \in U\}$ and $U^\prime \vert_k = \{(\mu_1,\mu_2,\dots,\mu_k): \mu \in U^\prime\}$.

	\begin{dfn}[{\cite[Definitions 2.3 and 2.7]{katsura2008class}}]
		Let $E= (E^0,E^1,d,r)$ be a topological graph. We say that a non-empty open subset $V$ of $E^0$ is a \textit{contracting} open set if the closure $\overline{V}$ is compact and there exist $N \in \N$ and non-empty open subsets $U_k \subseteq E^{n_k}$ for $k \in \{ 1,\dots,N\}$, where $1 \leq n_k < \infty $, such that
		\begin{enumerate}[(i)]
			\item $r (U_k) \subseteq V $ for $k \in \{ 1,\dots,N\}$;
			\item $U_k \pitchfork U_l = \emptyset $  for $k,l \in \{ 1,\dots,N\}$ with $k \neq l$;
			\item $\overline{V} \subsetneq \bigcup_{k=1}^N d (U_k)$.  
		\end{enumerate}
		A topological graph $E= (E^0, E^1,d,r)$ is called \textit{contracting at $v_0 \in E^0$} if $\overline{\text{Orb}^+(v_0)}= E^0$ and any neighbourhood $V_0$ of $v_0$ contains a contracting open set $V \subseteq V_0$. We call $E$ \textit{contracting} if $E$ is contracting at some $v_0 \in E^0$. 
	\end{dfn}
	
	Finally, we recall that a topological graph $(E^0,E^1,d,r)$ is \emph{second-countable} if both $E^0$ and $E^1$ are second-countable. We are now able to state Katsura's result about when topological graph C$^*$-algebras are UCT Kirchberg algebras. (Note that Katsura's definition of Kirchberg algebras in \cite{katsura2008class} includes the UCT.)
	
	\begin{thm} (\cite[Corollary B]{katsura2008class}) \label{thm:graph-contracting-Kirchberg-algebra}
		Let $E$ be a second-countable, minimal, contracting topological graph. Then $\mathcal{O}(E)$ is a UCT Kirchberg algebra. 
	\end{thm}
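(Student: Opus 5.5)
Since this statement is quoted from \cite[Corollary B]{katsura2008class}, the plan is to reassemble Katsura's argument from his general structure results on $\mathcal{O}(E)$. We check the four defining properties of a UCT Kirchberg algebra (separable, nuclear, simple, purely infinite, plus the UCT) separately, in the order of increasing difficulty.

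\emph{Separability, nuclearity and the UCT.} These need neither minimality nor contraction. Second-countability of $E^0$ and $E^1$ makes $C_0(E^0)$ and the module $C_d(E^1)$ separable. Hence $\mathcal{O}(E)$, which is generated by $t^0(C_0(E^0))$ and $t^1(C_d(E^1))$, is separable. By definition, $\mathcal{O}(E)$ is the Cuntz--Pimsner--Katsura algebra of the $C^*$-correspondence $C_d(E^1)$ over the commutative algebra $C_0(E^0)$.
\begin{enumerate}
\item[(a)] Nuclearity: Cuntz--Pimsner algebras over nuclear coefficient algebras are nuclear. Alternatively, one realises $\mathcal{O}(E)$ as the $C^*$-algebra of Katsura's boundary-path groupoid, which is amenable.
\item[(b)] UCT: the Toeplitz extension $0 \to \K \to \mathcal{T} \to \mathcal{O}(E) \to 0$ has $\mathcal{T}$ KK-equivalent to $C_0(E^0)$ and the ideal KK-equivalent to $C_0(E^0_{rg})$. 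Both are commutative, so they lie in the bootstrap class. The bootstrap class is closed under KK-equivalence and under the two-out-of-three property for extensions, so $\mathcal{O}(E)$ satisfies the UCT.
\end{enumerate}

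\emph{Simplicity.} Katsura's simplicity theorem says that $\mathcal{O}(E)$ is simple if and only if $E$ is minimal and topologically free, apart from the degenerate case where $E$ is a single finite cycle. Minimality is given, so the work is to show that a minimal contracting graph is topologically free. A minimal graph that fails topological freeness contains an open set of base points of cycles without entrances, and minimality then forces $E$ to be essentially a single finite cycle. On a finite cycle, each vertex has exactly one incoming path of each length, so any family of paths $U_k$ that are pairwise $\pitchfork$-disjoint has $\bigcup_k d(U_k)$ too small to strictly contain $\overline{V}$. This contradicts condition (iii) of a contracting open set. So $E$ is topologically free and $\mathcal{O}(E)$ is simple. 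Simplicity also gives non-triviality: $\mathcal{O}(E) \not\cong \C$ because it contains infinite elements, as shown next.

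\emph{Pure infiniteness.} This is the step I expect to be the main obstacle.
\begin{enumerate}
\item[(1)] Build an infinite element. Let $V$ be a contracting open set with data $U_1,\dots,U_N$. Take $f \in C_0(V)_+$ equal to $1$ on a neighbourhood of $\overline{V}$, with support inside $\bigcup_k d(U_k)$. Take functions $\xi_k$ supported on $U_k$. The pairwise disjointness $U_k \pitchfork U_l = \emptyset$ makes the elements $t^{n_k}(\xi_k)$ have orthogonal ranges. Their $d$-images cover $\overline{V}$, so $t^0(f)$ is dominated by a sum of orthogonal "copies" of itself. The strict inclusion in (iii) leaves room for a nonzero defect, so $t^0(f)$ is an infinite positive element.
\item[(2)] Spread it around. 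Since $\overline{\mathrm{Orb}^+(v_0)} = E^0$ and contracting sets exist in every neighbourhood of $v_0$, every nonzero positive function in $t^0(C_0(E^0))$ dominates, after conjugation by path elements, some such infinite element.
\item[(3)] Reduce arbitrary hereditary subalgebras to the diagonal. By topological freeness, a Cuntz--Krieger type uniqueness/compression argument applies: for any nonzero $a \in \mathcal{O}(E)_+$ there is $r$ with $r^*ar$ close to a nonzero element of $t^0(C_0(E^0))$. I would do this by compressing with the conditional expectation onto the fixed-point algebra of the gauge action and then onto $C_0(E^0)$ at points with trivial isotropy.
\end{enumerate}
Steps (2) and (3) show that every nonzero hereditary subalgebra contains an infinite projection-like element. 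Combined with simplicity, this gives pure infiniteness, completing the proof.
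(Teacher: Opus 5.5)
The paper does not prove this itself; it cites Katsura, whose argument has the same architecture as yours: separability, nuclearity and the UCT from the Cuntz--Pimsner picture of $C_d(E^1)$; simplicity from the criterion ``minimal and topologically free''; and pure infiniteness from contracting open sets. Your first block is fine.

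The genuine gap is in how you derive topological freeness. Your claim that minimality forces a non-topologically-free graph to be ``essentially a single finite cycle'' is false. Let $E^0=\{v,w_1,w_2,\dots\}$ and $E^1=\{e,f_1,f_2,\dots\}$ be discrete, with $d(e)=r(e)=v$, $d(f_1)=v$, $r(f_1)=w_1$, and $d(f_{n+1})=w_n$, $r(f_{n+1})=w_{n+1}$.
\begin{itemize}
\item Every vertex receives exactly one edge, so every vertex is regular.
\item The only negative orbit of any vertex runs back to $v$ and then around $e$ forever. Since $\text{Orb}^+(v)=E^0$, $E$ is minimal.
\item But $e$ is a cycle without entrances and $\{v\}$ is open, so $E$ is not topologically free, even though $E^0$ is infinite.
\end{itemize}
There are also minimal, non-topologically-free examples in which a vertex off the cycle receives two edges. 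Take the loop at $v$ together with edges $v\to w$, $v\to w'$, $w\to u$, $w'\to u$, where $a\to b$ denotes an edge with domain $a$ and range $b$. So ``exactly one incoming path of each length'' fails globally. You run the counting argument only on a literal cycle, so it does not exclude the graphs that actually need excluding.

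Your counting idea is right, but it must be localised at the point $v_0$ where $E$ is contracting. Let $W\neq\emptyset$ be the interior of the set of base points of cycles without entrances.
\begin{itemize}
\item A path whose range lies on a cycle without entrances must run inside that cycle.
\item Since $\text{Orb}^+(v_0)$ is dense, it meets $W$. Hence $v_0$ lies on a cycle $C$ without entrances, and $C$ is unique because backward paths from $v_0$ are forced. It follows that $W\cap\text{Orb}^+(v_0)\subseteq C$.
\item Density again gives $W\subseteq C$, so $W$ is a finite open set and its points are isolated.
\item Each vertex of $C$ receives only its cycle edge, $r$ is continuous and $d$ is open. So isolatedness propagates backwards around $C$, and $v_0$ is isolated.
\item Then $V_0=\{v_0\}$ would have to contain the contracting set $V=\{v_0\}$. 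But the paths with range $v_0$ are exactly the backward cycle paths, one of each length. So any two non-empty $U_k,U_l$ have $U_k\pitchfork U_l\neq\emptyset$, which forces $N=1$ and makes $d(U_1)$ a single point. A single point cannot properly contain $\overline V$.
\end{itemize}
Minimality is not even needed for this step.

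A smaller issue is step (1): $f$ cannot lie in $C_0(V)$ and also equal $1$ near $\overline V$, and the infiniteness mechanism is not ``$t^0(f)$ dominated by copies of itself''. Instead:
\begin{itemize}
\item Choose $\xi_k\in C_c(U_k)$ so that $h=\sum_k\langle\xi_k,\xi_k\rangle$ is positive on $\overline V$ and at some point outside $\overline V$, and set $s=\sum_k t^{n_k}(\xi_k)$.
\item Then $s^*s=t^0(h)$ by $\pitchfork$-orthogonality.
\item Also $ss^*$ lies in the hereditary subalgebra of $t^0(g)$ for some $g\in C_c(V)$ that equals $1$ on $\bigcup_k r(\mathrm{supp}\,\xi_k)$.
\item Split $h$ into orthogonal pieces: $h'$ positive on $\overline V$, and $h''\neq 0$ supported away from $\overline V$.
\end{itemize}
This gives $t^0(h')\oplus t^0(h'')\precsim t^0(h)\precsim t^0(g)\precsim t^0(h')$, so $t^0(h')$ is infinite.

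Your final passage from ``infinite elements in every hereditary subalgebra'' to pure infiniteness is fine. In a simple algebra, $a\oplus b\precsim a$ with $b\neq 0$ already forces $a\oplus a\precsim a$.
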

	
	To compute the $K$-theory of the topological graph C$^*$-algebras that we construct in the proof of Theorem \ref{thm:general}, we shall make use of Katsura's 6-term exact sequence.
	
	\begin{prop} (\cite[Corollary 6.10]{katsura2004class})\label{prop:graph-k-theory}
		Suppose $E= (E^0, E^1,d,r)$ is a second-countable topological graph. Then there exists a 6-term exact sequence of the form 
		
		\begin{equation}\label{eqn:PV-6-term-exact-seq}
			\begin{tikzcd}[column sep=1cm]
				K_0(C_0(E_{rg}^0)) \arrow[r]  & K_0(C_0(E^0)) \arrow[r, "t^0_*"] & K_0(\mathcal{O}(E))\arrow[d]\\
				K_1(\mathcal{O}(E)) \arrow[u] & K_1(C_0(E^0)) \arrow[l, "t^0_*"]  & K_1(C_0(E_{rg}^0)) \arrow[l],
			\end{tikzcd}
		\end{equation} 
		where $t^0:C_0(E^0) \rightarrow \mathcal{O}(E)$ is the canonical $^*$-homomorphism (see Definition \ref{dfn:TopologicalGraphAlgebra}) and $t^0_*$ denotes the induced map on $K$-theory. Furthermore, if $E_{rg}^0 = \emptyset$, then $t^0$ induces isomorphisms  
		$K_0(C_0(E^0)) \rightarrow K_0(\mathcal{O}(E))$ and 
		$K_1(C_0(E^0)) \rightarrow K_1(\mathcal{O}(E))$.
	\end{prop}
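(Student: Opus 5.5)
The plan is to realise $\mathcal{O}(E)$ as a Cuntz--Pimsner algebra and then run Pimsner's Toeplitz-extension argument. In Katsura's generality this does not need $\pi_r$ to be injective or to land in the compacts. Set $A = C_0(E^0)$ and $X = C_d(E^1)$, a C$^*$-correspondence over $A$ with left action $\pi_r$. Katsura's ideal for $X$ is
\[
J_X = \pi_r^{-1}(\K(X)) \cap (\ker \pi_r)^\perp .
\]
The first step is to check that $J_X = C_0(E^0_{rg})$. This should be a pointwise computation:
\begin{itemize}
\item $\pi_r(f)$ is compact exactly when $f$ vanishes off the set of vertices having a neighbourhood $V$ with $r^{-1}(V)$ compact;
\item $f \in (\ker\pi_r)^\perp$ exactly when $f$ is supported in the closure of $r(E^1)$.
\end{itemize}
Combining the two conditions gives the regular vertices. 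With this identification, Definition~\ref{dfn:TopologicalGraphAlgebra}(i)--(iii) are exactly the defining relations of a covariant representation of $X$ in Katsura's sense. So $\mathcal{O}(E) \cong \mathcal{O}_X$, with $t^0$ corresponding to the canonical map $A \to \mathcal{O}_X$.

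Next I would use the Toeplitz algebra $\mathcal{T}_X$, generated by $A$ and $X$ subject only to (i) and (ii). Realise it on the Fock module $\mathcal{F}_X = \bigoplus_{n\ge 0} X^{\otimes n}$. The argument has three steps.

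\begin{enumerate}
\item Show that the inclusion $A \to \mathcal{T}_X$ is a $KK$-equivalence. Pimsner's homotopy argument deforms the Fock representation to the one that kills the vacuum summand; this goes through unchanged because it uses only the Fock space structure.
\item Identify the kernel $I$ of the quotient map $\mathcal{T}_X \to \mathcal{O}_X$. It is generated by the elements $t^0(f) - \varphi(\pi_r(f))$ for $f \in J_X$. In the Fock representation it equals $\K(\mathcal{F}_X J_X)$. This module is full over $J_X$ because $J_X$ acts faithfully on $\mathcal{F}_X$, using the $(\ker\pi_r)^\perp$ condition. Hence $I$ is Morita equivalent to $J_X$, and $K_*(I) \cong K_*(J_X)$.
\item Apply the six-term sequence of the extension $0 \to I \to \mathcal{T}_X \to \mathcal{O}_X \to 0$. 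Replace $K_*(I)$ by $K_*(J_X)$ and $K_*(\mathcal{T}_X)$ by $K_*(A)$. This yields the displayed sequence~\eqref{eqn:PV-6-term-exact-seq}.
\end{enumerate}

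In the resulting sequence the map $K_*(\mathcal{T}_X) \to K_*(\mathcal{O}_X)$ becomes $t^0_*$. The map $K_*(J_X) \to K_*(A)$ should be $\iota_* - [X]_*$, where $[X]$ is the class of the restriction of $X$ to $J_X$; its precise form is not needed for the statement. The final claim is then immediate. If $E^0_{rg} = \emptyset$, then $J_X = 0$, both $K$-groups of $C_0(E^0_{rg})$ vanish, and exactness forces $t^0_*$ to be an isomorphism in both degrees. Second-countability is used to ensure separability, so that the $KK$-theoretic identifications are available.

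I expect step~2 to be the main obstacle. One must prove that the kernel is precisely $\K(\mathcal{F}_X J_X)$, and not something larger, when $\pi_r$ is non-injective or non-compact. This is where the definition of $J_X$, and hence of regular vertices, is essential. I would handle it through Katsura's gauge-invariant uniqueness theorem: first show that $\mathcal{T}_X / \K(\mathcal{F}_X J_X)$ satisfies the $\mathcal{O}_X$ relations with $A$ embedded injectively and admits a gauge action. Then uniqueness identifies it with $\mathcal{O}_X$.
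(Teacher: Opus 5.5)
Your proposal is correct, but it takes a different, self-contained route: the paper does not prove the six-term sequence at all. It quotes \cite[Corollary 6.10]{katsura2004class} and only adds that, when $E^0_{rg}=\emptyset$, the sequence remains valid with $C_0(E^0_{rg})$ read as $0$, so exactness makes $t^0_*$ an isomorphism. You instead reconstruct the argument behind Katsura's corollary. You view $\mathcal{O}(E)$ as a Cuntz--Pimsner algebra over $A=C_0(E^0)$ and combine the Toeplitz extension, Morita invariance of $K$-theory for its kernel, and Pimsner's $KK$-equivalence $A\to\mathcal{T}_X$ (which is known to work without injectivity of the left action). The citation buys brevity. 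Your version shows where second-countability enters and handles the degenerate case more transparently: if the covariance ideal is $0$, then $\mathcal{T}_X=\mathcal{O}(E)$ and $t^0_*$ is an isomorphism by step 1 alone.

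Two details should be corrected. First, fullness of $\mathcal{F}_XJ_X$ over $J_X$ has nothing to do with faithfulness of the left action or with $(\ker\pi_r)^\perp$. The degree-zero summand $X^{\otimes 0}J_X=J_X$ already has inner products spanning $J_X$.

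Second, step 2 is not the obstacle you expect, and gauge-invariant uniqueness is not needed there. Since you define the algebras universally, the kernel is by definition the ideal generated by the elements $t^0(f)-\varphi(\pi_r(f))$. In the Fock representation these are $fP_0$, with $P_0$ the projection onto $X^{\otimes 0}$. Writing $T_\xi$ for creation operators, one checks directly three facts:
\begin{enumerate}
\item $T_\xi fP_0T_\eta^*=\theta_{\xi f,\eta}$;
\item $fP_0\in\K(\mathcal{F}_XJ)$;
\item $\K(\mathcal{F}_XJ)$ is an ideal of $\mathcal{L}(\mathcal{F}_X)$.
\end{enumerate}
So the generated ideal is exactly $\K(\mathcal{F}_XJ)$, and this holds for any ideal $J\subseteq\pi_r^{-1}(\K(C_d(E^1)))$.

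Definition~\ref{dfn:TopologicalGraphAlgebra} is already the relative Cuntz--Pimsner algebra for $J=C_0(E^0_{rg})$, and the paper records that $\pi_r(C_0(E^0_{rg}))\subseteq\K(C_d(E^1))$. Hence your identification of $C_0(E^0_{rg})$ with Katsura's $J_X$ is not needed for the $K$-theory; that identification is the only place where $(\ker\pi_r)^\perp$ enters. The inputs that genuinely require citation are two. One is the faithfulness of the Fock representation on the universal Toeplitz algebra, which you use tacitly when you realise $\mathcal{T}_X$ on $\mathcal{F}_X$. The other is the $KK$-equivalence of step 1.
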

	\begin{proof}
		This is a restatement of \cite[Corollary 6.10]{katsura2004class}, where the remaining maps in the 6-term exact sequence are defined. When $E_{rg}^0 = \emptyset$, the 6-term exact sequence is still valid with $C_0(E_{rg}^0)$ interpreted as the zero C$^*$-algebra. Exactness then ensures that $t^0$ induces isomorphisms on $K$-theory.
	\end{proof}
	
	\subsection{Deaconu--Renault groupoids}\label{subsec:DR-groupoids}
	In this section, we recall the definiton of an important class of examples of groupoids, introduced by Deaconu (\cite{deaconu1995groupoids}) and Renault (\cite{renault2000cuntz}). We will use \cite{renault2000cuntz} as a reference.
	
	A \emph{partial local homeomorphism} $\sigma$ on a topological space $X$ is a local homeomorphism from one open subset $\dom(\sigma) \subseteq X$ to another open subset $\ran(\sigma) \subseteq X$. The composition $\sigma_1 \circ \sigma_2$ of two partial local homeomorphisms $\sigma_1$ and $\sigma_2$ of $X$ is a well-defined partial local homeomorphism on the open set $\sigma_2^{-1}(\dom(\sigma_1) \cap \ran(\sigma_2))$, although this open set may be empty in general. We write $\sigma^k$ for the $k$-fold composition of a single partial local homeomorphism $\sigma$ on $X$, where by convention $\sigma^0 = \id_X$ and $\sigma^1 = \sigma$.
	
	The \emph{Deaconu--Renault groupoid} $\G(X,\sigma)$ associated to a partial local homeomorphism $\sigma$ on a second-countable, locally compact, Hausdorff space $X$ is defined as follows. The underlying set of  $\G(X,\sigma)$ is  
	\begin{equation}
		\{(x,n - m, y): n,m \in \N_0, x \in \dom(\sigma^n), y \in \dom(\sigma^m), \sigma^n(x) = \sigma^m(y)\}.
	\end{equation}
	The unit space is $\{(x,0,x): x \in X\}$, which is identified with $X$ via $x \mapsto (x,0,x)$. The range and source maps are given by $r(x,k, y) = x$ and $s(x,k, y) = y$. Multiplication is given by $(x,k,y)(y,\ell,z) = (x,k+\ell,z)$, and the inverse is given by $(x,k,y)^{-1} = (y,-k,x)$. The topology on $\G(X,\sigma)$ is generated by the basic open sets $B(U,n,m,V)$ given by
	\begin{equation}
		B(U,n,m,V) = \{(x,n - m, y): x \in U, y \in V, \sigma^n(x) = \sigma^m(y)\}, 
	\end{equation}
	where $n,m \in \N_0$, $U$ is an open subset of $\dom(\sigma^n)$, $V$ is an open subset of $\dom(\sigma^m)$, and both $\sigma^n\vert_{U}$ and $\sigma^m\vert_{V}$ are injective.
	One can then verify that $\G(X,\sigma)$ is a second-countable, locally compact, Hausdorff, étale groupoid.
	
	\subsection{Groupoids associated to topological graphs}\label{subsec:top-graphs-groupoids}
	
	The first groupoid models for topological graph C$^*$-algebras were constructed by Yeend (\cite{yeend2006topological,Yeend07}). This construction was further studied and extended by Kumjian and Li (\cite{kumjian2017twisted}). In this subsection, we review the construction of groupoids associated to topological graphs, following the conventions and notation of \cite{kumjian2017twisted}.
	
	The \emph{boundary path space} $\partial E$ of a topological graph $E=(E^0, E^1,d,r)$ is the set 
	\begin{equation}
		\partial E =  E^\infty \sqcup \{\mu \in E^* : d(\mu) \in E^0_{sg}\},
	\end{equation}
	equipped with a natural locally compact, Hausdorff topology. This topology is defined as follows:
	For a subset $S \subseteq E^*$, we set
	\begin{equation}
		Z(S) = \{ \mu \in \partial E : r(\mu) \in S \text{ or }  (\mu_1, \ldots, \mu_i) \in S \text{ for some } i \le |\mu|\}.
	\end{equation}
	Then $\partial E$ carries the topology generated by the basic open sets $Z(U) \cap Z(K)^c$,
	where $U$ is an open subset of $E^*$ and $K$ is a compact subset of $E^*$ (see \cite[Definition 3.7]{kumjian2017twisted}). Assuming $E$ is second-countable, the topology on $\partial E$ is second-countable, and so is completely determined by the convergence of sequences and thus by the following lemma of Kumjian--Li.
	
	\begin{lem}[{\cite[Lemma 3.8]{kumjian2017twisted}}]\label{lem:Kumjian-Li}
		Let $E$ be a topological graph. Fix a sequence $(\mu^{(n)})_{n=1}^\infty \subseteq \partial E$, and fix $\mu \in \partial E$. Then $\mu^{(n)} \to \mu$ if and only if
		\begin{enumerate}[(i)]
			\item $r(\mu^{(n)}) \to r(\mu)$;
			\item for $1 \le i \le |\mu|$ with $i \ne \infty$, there exists $N \ge 1$ such that $|\mu^{(n)}| \ge i$ whenever $n \ge N$ and
			$(\mu^{(n)}_1,\ldots,\mu^{(n)}_i) \to (\mu_1, \ldots, \mu_i)$ as $n \to \infty$; 
			\item if $|\mu| < \infty$, then the set $\{ n : |\mu^{(n)}| > |\mu| \text{ and } \mu^{(n)}_{|\mu|+1} \in K \}$ is finite for any compact set $K \subseteq E^1$.
		\end{enumerate}
	\end{lem}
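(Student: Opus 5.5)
The plan is to prove both implications directly from the definition of the topology on $\partial E$, whose basic open sets are $Z(U)\cap Z(K)^c$ with $U\subseteq E^*$ open and $K\subseteq E^*$ compact. Two elementary facts will be used repeatedly. First, $E^*=\bigsqcup_{j} E^j$ is a topological disjoint union, so a compact $K\subseteq E^*$ meets only finitely many $E^j$, and each $K\cap E^j$ is compact. Second, for $j\ge 1$ the coordinate projections $E^j\to E^1$ and the truncations $E^j\to E^i$ ($i\le j$) are continuous. Since the lemma only concerns sequences, no countability hypothesis is needed.

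\emph{($\Rightarrow$).} Assume $\mu^{(n)}\to\mu$.
\begin{itemize}
\item[(i)] For an open neighbourhood $V\subseteq E^0$ of $r(\mu)$, the set $Z(V)$ is a basic neighbourhood of $\mu$ (take $K=\emptyset$). Hence $r(\mu^{(n)})\in V$ eventually.
\item[(ii)] Fix finite $i\le|\mu|$ and an open neighbourhood $W\subseteq E^i$ of $(\mu_1,\dots,\mu_i)$. Since $W\subseteq E^i$, membership of a path in $Z(W)$ means exactly that the path has length at least $i$ and its initial segment of length $i$ lies in $W$. So eventually $|\mu^{(n)}|\ge i$ and $(\mu^{(n)}_1,\dots,\mu^{(n)}_i)\in W$.
\item[(iii)] Let $m=|\mu|<\infty$ and let $K\subseteq E^1$ be compact. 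Choose a compact neighbourhood $C$ of $(\mu_1,\dots,\mu_m)$ in $E^m$; when $m=0$, take $C$ a compact neighbourhood of $r(\mu)$ in $E^0$. Set
\[
K'=\{\nu\in E^{m+1}: (\nu_1,\dots,\nu_m)\in C,\ \nu_{m+1}\in K\}.
\]
This is a closed subset of $C\times K$, because the matching condition $d(\nu_m)=r(\nu_{m+1})$ is closed in the Hausdorff space $E^0$. Hence $K'$ is compact. Since $\mu$ has length $m$, we have $\mu\notin Z(K')$, so $Z(\mathrm{int}\,C)\cap Z(K')^c$ is a neighbourhood of $\mu$. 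Eventually $\mu^{(n)}$ lies in it. So the length-$m$ segment of $\mu^{(n)}$ lies in $C$ but $\mu^{(n)}\notin Z(K')$, which forces $|\mu^{(n)}|=m$ or $\mu^{(n)}_{m+1}\notin K$. This gives the finiteness in (iii).
\end{itemize}

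\emph{($\Leftarrow$).} Assume (i)--(iii), and let $\mu\in Z(U)\cap Z(K)^c$.
\begin{itemize}
\item[(a)] \emph{Eventually $\mu^{(n)}\in Z(U)$.} Either $r(\mu)\in U\cap E^0$ or some initial segment $(\mu_1,\dots,\mu_i)$ lies in the open set $U\cap E^i$. In the first case (i) applies, and in the second case (ii) applies.
\item[(b)] \emph{Eventually $\mu^{(n)}\notin Z(K)$.} Suppose instead that infinitely many $\mu^{(n)}$ lie in $Z(K)$. Because $K$ meets only finitely many $E^j$, we may pass to a subsequence for which a fixed length $j$ works: either $r(\mu^{(n)})\in K\cap E^0$ (the case $j=0$), or $(\mu^{(n)}_1,\dots,\mu^{(n)}_j)\in K\cap E^j$.
  \begin{itemize}
  \item If $j\le|\mu|$, then (i) or (ii) gives convergence of these segments to the corresponding segment of $\mu$. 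Since $K\cap E^j$ is closed, that segment of $\mu$ lies in $K$, so $\mu\in Z(K)$, a contradiction.
  \item If $j>|\mu|=m$, then $\mu^{(n)}_{m+1}$ lies in the image of $K\cap E^j$ under the $(m+1)$-th coordinate projection. That image is compact, so infinitely many $n$ satisfy the condition in (iii) for this compact set, contradicting (iii).
  \end{itemize}
\end{itemize}

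The main obstacle is the bookkeeping of compact subsets of $E^*$. This concerns the construction of the compact set $K'$ in the forward direction of (iii), including the degenerate case $|\mu|=0$, and the reduction to a single length $j$ in the converse. Both rest on the disjoint-union topology of $E^*$ and on closedness of the path-matching conditions.
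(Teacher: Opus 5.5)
Your proof is correct and complete. The paper gives no proof of its own here and simply quotes Kumjian--Li's Lemma 3.8. Your direct check is the standard argument: the forward direction tests convergence against the neighbourhoods $Z(V)$, $Z(W)$ and $Z(\mathrm{int}\,C)\cap Z(K')^c$, and the converse tests an arbitrary generating set $Z(U)\cap Z(K)^c$, using that a compact $K\subseteq E^*$ meets only finitely many $E^j$. This is essentially how the cited lemma is proved.
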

	
	There is a natural partial local homeomorphism $\sigma_E$ on $\partial E$ with domain $\partial E \setminus E^0_{sg}$ defined by combining the left shift $E^\infty \rightarrow E^\infty$, the left shift  $E^n \rightarrow E^{n-1}$ for all $n \ge 2$, and the domain map $d$ on $E^1$ (see \cite[Lemma 6.1]{kumjian2017twisted}). We can now state the main result of this subsection.
	
	\begin{prop}[{\cite[Theorem 5.2]{yeend2006topological}, \cite[Theorem 6.7]{kumjian2017twisted}}]\label{prop:groupoid-model-for-graphs} 
		Let $E= (E^0, E^1,d,r)$ be a second-countable topological graph. 
        Let $\G(\partial E,\sigma_E)$ be the Deaconu--Renault groupoid associated to the left shift map $\sigma_E$ on the boundary path space $\partial E$.
        Then $\mathcal{O}(E) \cong C^*_r(\G(\partial E,\sigma_E))$.
	\end{prop}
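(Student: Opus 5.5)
The plan is to build a Katsura-covariant pair inside $C_r^*(\G(\partial E,\sigma_E))$, use the universal property of $\mathcal{O}(E)$ (Definition~\ref{dfn:TopologicalGraphAlgebra}) to get a $^*$-homomorphism $\Phi:\mathcal{O}(E)\to C_r^*(\G(\partial E,\sigma_E))$, and then show that $\Phi$ is injective and surjective. Write $\G=\G(\partial E,\sigma_E)$.

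\textbf{Step 1: the covariant pair.} For $f\in C_0(E^0)$, let $\tau^0(f)\in C_0(\partial E)\subseteq C_r^*(\G)$ be the function $\mu\mapsto f(r(\mu))$. For $\xi\in C_c(E^1)$, let $\tau^1(\xi)$ be supported on the bisection
\[
\{(\mu,1,\sigma_E(\mu)) : \mu\in Z(E^1)\},
\]
given by $(\mu,1,\sigma_E(\mu))\mapsto \xi(\mu_1)$. A direct convolution computation gives $\tau^1(\xi)^*\tau^1(\eta)=\tau^0(\langle\xi,\eta\rangle)$. The point is that the fibre of $\sigma_E$ over a boundary path $\nu$ is $\{e\nu : e\in d^{-1}(r(\nu))\}$, which is exactly the sum in the inner product. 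Relation (ii) follows because $r(e\nu)=r(e)$. One then extends $\tau^1$ from $C_c(E^1)$ to $C_d(E^1)$ by the norm estimate $\|\tau^1(\xi)\|^2=\|\langle\xi,\xi\rangle\|$.

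\textbf{Step 2: Cuntz--Pimsner covariance (iii), the delicate relation.} The product $\tau^1(\xi)\tau^1(\eta)^*$ is the function on $\partial E$ (restricted to $Z(E^1)$) given by $\mu\mapsto \xi(\mu_1)\overline{\eta(\mu_1)}$. So $\varphi(\pi_r(f))$ is the function $\mu\mapsto f(r(\mu))$ restricted to paths of length $\ge 1$. This agrees with $\tau^0(f)$ exactly when $f$ vanishes on $r$ of paths of length $0$, i.e.\ on $E^0_{sg}\cap\partial E$. This is precisely why $\partial E$ contains only finite paths ending at singular vertices. To make it rigorous one approximates $\pi_r(f)$ for $f\in C_c(E^0_{rg})$ by finite sums $\sum\theta_{\xi_i,\xi_i}$ using a partition of unity on the compact set $r^{-1}(\mathrm{supp} f)$ by open sets on which $d$ is injective.

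\textbf{Step 3: surjectivity.} The images of $\tau^0$ and $\tau^1$ generate $C_r^*(\G)$. Products of $\tau^1$'s and their adjoints give functions on the basic bisections $B(U,n,m,V)$ with $U$ and $V$ cylinder sets $Z(\cdot)$ of paths. Together with $C_0(\partial E)$, which is generated as a C$^*$-algebra by functions of the form $\mu\mapsto g(\mu_1,\ldots,\mu_n)$ and $\mu\mapsto f(r(\mu))$ by Stone--Weierstrass and Lemma~\ref{lem:Kumjian-Li}, these span a dense subalgebra of $C_c(\G)$.

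\textbf{Step 4: injectivity (main obstacle).} I would use Katsura's gauge-invariant uniqueness theorem. The cocycle $c(x,k,y)=k$ is continuous, so it induces a gauge action $\gamma$ of $\mathbb{T}$ on $C_r^*(\G)$ with
\[
\gamma_z\circ\tau^1=z\,\tau^1 \quad\text{and}\quad \gamma_z\circ\tau^0=\tau^0 .
\]
It then suffices to show:
\begin{itemize}
\item $\tau^0$ is injective, which holds because $r:\partial E\to E^0$ is surjective; every vertex is either singular or the range of an edge, and one extends to a boundary path;
\item the pair is faithful in Katsura's sense: if $\tau^0(f)\in\varphi(\mathcal{K})$ then $f\in C_0(E^0_{rg})$.
\end{itemize}
The second condition is the real content. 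Elements of $\varphi(\mathcal{K})$ vanish on length-zero paths, so $f$ vanishes on $E^0_{sg}$ and hence lies in $C_0(E^0_{rg})$.

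I expect Steps 2 and 4 to be the hardest points, because they are where the topology of $\partial E$ near singular vertices enters, in particular condition (iii) of Lemma~\ref{lem:Kumjian-Li}. I do not need to compare full and reduced norms, since only the reduced algebra appears and the uniqueness theorem applies to any gauge-equivariant representation.
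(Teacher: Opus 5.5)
Your proposal is correct and follows the standard route used in the sources the paper cites (Yeend, Kumjian--Li); the paper itself gives no proof. That route is: realise an injective Cuntz--Krieger $E$-pair in the groupoid algebra via $f\mapsto f\circ r$ and $\xi\mapsto\bigl[(\mu,1,\sigma_E(\mu))\mapsto\xi(\mu_1)\bigr]$, check that it generates, and apply Katsura's gauge-invariant uniqueness theorem with the gauge action from the cocycle $(x,k,y)\mapsto k$; because you map directly into $C_r^*(\G(\partial E,\sigma_E))$, amenability is never needed.

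Two minor corrections. First, $\tau^1(\xi)\tau^1(\eta)^*$ is in general the function $(\mu,0,\mu')\mapsto\xi(\mu_1)\overline{\eta(\mu'_1)}$ on $\{(\mu,0,\mu'):\sigma_E(\mu)=\sigma_E(\mu')\}$, not a function on the unit space. So your opening claim in Step 2 only holds when $\xi,\eta$ are supported in a common open set on which $d$ is injective; that is exactly what your partition-of-unity argument arranges, so Step 2 survives. Second, the second bullet of Step 4 is not an extra hypothesis of Katsura's theorem, since it holds automatically for injective Cuntz--Krieger pairs, though your verification of it is fine.
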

	
	\subsection{Minimal dynamics on point-like spaces}\label{subsec:point-like}
	
	A homeomorphism $\rho:X \rightarrow X$ from a topological space $X$ to itself is said to be \emph{minimal} if the only closed $\rho$-invariant subsets of $X$ are $\emptyset$ and $X$. 
	
	In general, there are topological obstructions to the existence of a minimal homeomorphism on a given space (see \cite{deeley2023min-homeo} for a discussion). In particular, the Lefschetz fixed point theorem (\cite{Lefschetz37}) implies that no even-dimensional sphere can be equipped with a minimal homeomorphism. However, minimal homeomorphisms are known to exist on all odd-dimensional spheres (\cite{FH77}). 
	
	Motivated by the goal of finding a dynamical model for the Jiang--Su algebra $\mathcal{Z}$, Deeley, Putnam and Strung constructed a minimal homeomorphism on a space $Z$ that has the same cohomology and $K$-theory as a point (\cite{deeley2018constructing}). The existence of such a minimal dynamical system is consistent with the Lefschetz fixed point theorem because the space $Z$ does not satisfy the regularity conditions of the fixed point theorem.
	
	Loosely speaking, the space $Z$ is constructed by starting with a minimal homeomorphism $\rho:S^d \rightarrow S^d$ where $d$ is odd and $d \geq 3$.
	A suitable $\rho$-invariant subset $L_\infty \subseteq S^d$ is then removed and the resulting space $S^d \setminus L_\infty$ is completed with respect to a suitable metric. The map $\rho$ is then shown to induce a minimal homeomorphism on $Z$, and the desired topological properties of $Z$ are established by realising $Z$ as an inverse limit of contractible spaces. 
	
	The following theorem summarises the key properties of the Deeley--Putnam--Strung construction that we will use in this paper. We write $\{*\}$ for the topological space with a single point. 
	
	\begin{thm}[{\cite{deeley2018constructing}  and \cite[Theorem 2.5]{deeley2023min-homeo}}] \label{thm:DPS-Theorem}
		Let $d$ be an odd number with $d \geq 3$. There exists an infinite compact metric space $Z$ with covering dimension $d$ or $d-1$ and a minimal homeomorphism $\rho:Z \rightarrow Z$ such that, for any continuous generalised cohomology theory $H^*(\cdot)$, there is an isomorphism of groups $H^*(Z) \cong H^*(\{*\})$. In particular, this holds for \v{C}ech cohomology and $K$-theory.
	\end{thm}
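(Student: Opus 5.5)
The plan is to combine the Deeley--Putnam--Strung construction with continuity of generalised cohomology; the topological input is all in \cite{deeley2018constructing} and \cite[Theorem 2.5]{deeley2023min-homeo}, so I would only reassemble it. First, fix an odd $d \geq 3$ and take a minimal homeomorphism $\rho_0 : S^d \to S^d$; these exist by \cite{FH77}, and one may take $\rho_0$ to be a minimal diffeomorphism. Then choose a suitable closed set $L \subseteq S^d$, namely a tame embedded copy of a contractible set of codimension at least one (in \cite{deeley2018constructing}, a codimension-one closed disc/arc-like set), which meets each $\rho_0$-orbit at most once in the appropriate sense. Form the $\rho_0$-invariant set $L_\infty = \bigcup_{n \in \Z} \rho_0^n(L)$. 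Equip $S^d \setminus L_\infty$ with a metric that separates points approaching $L_\infty$ from its two sides, i.e.\ ``cuts open'' along each $\rho_0^n(L)$, and let $Z$ be the completion. Since $\rho_0$ preserves $L_\infty$, one checks that $\rho_0$ is uniformly continuous for this metric, with uniformly continuous inverse, and so extends to a homeomorphism $\rho$ of $Z$.

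Second, I would prove that $Z$ is compact metric and infinite, and that $\rho$ is minimal. Compactness comes from total boundedness of the new metric, which holds because cutting along finitely many pieces $\rho_0^n(L)$ at a time yields compact spaces. For minimality, note that the natural quotient map $Z \to S^d$ is a factor map that is injective off a meagre set. A closed $\rho$-invariant set in $Z$ projects onto a closed $\rho_0$-invariant set, which is all of $S^d$ by minimality of $\rho_0$. Using density of the points with a single preimage, and the way the metric is built, this forces the closed invariant set to be all of $Z$.

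Third, and this is the heart of the matter and the step I expect to be the main obstacle, I would write $Z = \varprojlim Z_N$. Here $Z_N$ is $S^d$ cut open along $\rho_0^n(L)$ for $|n| \leq N$, and the bonding maps glue back the outermost cuts. One must choose $L$ so that each $Z_N$ is contractible: cutting a sphere open along a codimension-one disc that is ``slit'' appropriately should produce a space homotopy equivalent to a point, or at least having the cohomology of one. Controlling this requires the images $\rho_0^n(L)$ to be pairwise disjoint and in general position, which is where the specific choice of $\rho_0$ and $L$ in \cite{deeley2018constructing} enters. Given this, for any continuous generalised cohomology theory $H^*$ we get
\[
H^*(Z) \cong \varinjlim H^*(Z_N) \cong \varinjlim H^*(\{*\}) \cong H^*(\{*\}),
\]
which covers \v{C}ech cohomology and $K$-theory.

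Finally, each $Z_N$ is a compact $d$-dimensional (cut-open) manifold-with-boundary, so $\dim Z \leq d$ as an inverse limit. For the lower bound, $Z$ contains open subsets homeomorphic to open subsets of $S^d \setminus L_\infty$, which is a dense $G_\delta$ of dimension at least $d-1$. This gives covering dimension $d$ or $d-1$, as claimed.
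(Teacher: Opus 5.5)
The paper does not prove this statement. It imports it from Deeley--Putnam--Strung, with only a sketch: an invariant set $L_\infty$ is removed from $S^d$, the complement is completed, and $Z$ is realised as an inverse limit of contractible spaces. Your outline follows that sketch, but your third step, which carries all the cohomological content, fails as you set it up. The requirement you impose is what breaks it.

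Suppose the translates $\rho_0^n(L)$ are pairwise disjoint tame codimension-one discs. Then your $Z_N$ is $S^d$ with $2N+1$ disjoint open $d$-balls removed, so $H^{d-1}(Z_N)\cong \Z^{2N}$. Equivalently, by Alexander duality, $\tilde H_{d-1}(S^d\setminus K_N)\cong \tilde H^0(K_N)$ with $K_N=\bigcup_{|n|\le N}\rho_0^n(L)$, which has $2N+1$ components. Now look at the bonding maps:
\begin{itemize}
\item The map $Z_{N+1}\to Z_N$ regluing the two outermost cuts is a homeomorphism near the boundary spheres indexed by $|n|\le N$.
\item The restriction $H^{d-1}(Z_N)\to H^{d-1}(\partial Z_N)$ is injective, because $H^{d-1}(Z_N,\partial Z_N)\cong H_1(Z_N)=0$ by Lefschetz duality (using $d\ge 3$).
\item Hence every bonding map is injective on $H^{d-1}$.
\end{itemize}
It follows that $\check H^{d-1}(Z)\cong \varinjlim \Z^{2N}\neq 0$, so your $Z$ is not point-like.

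General position of the translates cannot repair this. Since $Z_N\simeq S^d\setminus K_N$, Alexander duality shows that $Z_N$ is acyclic only if $K_N$ is itself acyclic, and in particular connected. That is incompatible with disjointness. What your scheme needs is a set $L$ whose finite unions of translates are, for example, tame $(d-1)$-discs, so that each $Z_N$ is a closed $d$-ball. Producing such a configuration for a minimal diffeomorphism of $S^d$, or otherwise obtaining contractible approximants, is the real content of the Deeley--Putnam--Strung construction, and your proposal does not supply it.

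Your dimension lower bound is also unjustified. The set $S^d\setminus L_\infty$ is the complement of a dense countable union of $(d-1)$-dimensional compacta, and the sum theorem only gives $\dim(S^d\setminus L_\infty)\ge 0$. A correct argument is available if the factor map $Z\to S^d$ has fibres of at most two points. It is then a closed map with at most two-point fibres, and Hurewicz's theorem on such maps gives $d\le \dim Z+1$.
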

	
	Translating from topological $K$-theory to C$^*$-algebraic $K$-theory, we observe that the C$^*$-algebra $C(Z)$ of continuous functions on the space $Z$ from Theorem \ref{thm:DPS-Theorem} has the same $K$-theory as the complex numbers $\C$. In fact, $C(Z)$ is KK-equivalent to $\C$ by \cite[Corollary 1.12]{deeley2018constructing}.

	\section{Main results}
	
	In this section, we prove the main results stated in the introduction.
	We shall begin with the proof of Theorem \ref{thm:general}, which is the key construction of the paper.
	The main idea is to combine Katsura's topological graph construction from \cite[Proposition 2.9]{katsura2008class} with the minimal dynamical system of Deeley--Putnam--Strung discussed in Section~\ref{subsec:point-like} above.
	
	\begin{thm}[Theorem~\ref{thm:general}]
		Let $X$ be a second-countable, locally compact, Hausdorff space. Then there exists a principal, second-countable, locally compact, Hausdorff, étale groupoid $\G$ such that $C_r^*(\G)$ is a UCT Kirchberg algebra satisfying
		\begin{align}
			K_0(C_r^*(\G)) &\cong K_0(C_0(X)),\label{eqn:ThmB-K0}\\
			K_1(C_r^*(\G)) &\cong K_1(C_0(X)).
		\end{align}
		If $X$ has finite covering dimension, then the unit space $\G^{(0)}$ has finite covering dimension.
		If $X$ is compact, then the unit space $\G^{(0)}$ is compact, the C$^*$-algebra $C_r^*(\G)$ is unital, and the isomorphism of $K_0$-groups can be chosen to preserve the $K_0$-class of the unit. 
	\end{thm}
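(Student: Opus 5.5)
We build a topological graph $E$ by modifying Katsura's construction \cite[Proposition 2.9]{katsura2008class} so that the shift map on paths is tied to the minimal homeomorphism $\rho$ of Theorem~\ref{thm:DPS-Theorem}. Concretely, fix $d \ge 3$ odd and let $(Z,\rho)$ be as in Theorem~\ref{thm:DPS-Theorem}. Let $C$ be a Cantor space. Take the vertex space
\[
E^0 = X \times Z \times C ,
\]
or $(X \sqcup \{\text{point}\}) \times Z \times C$ if needed, following Katsura's trick of adjoining auxiliary vertices. In Katsura's construction (built from $X$ and a Cantor set), the vertices are all singular and the edges give minimality and contraction.

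We choose $E^1$ and $d,r$ so that the following hold.
\begin{enumerate}
\item[(a)] Every vertex is singular, i.e.\ $E^0_{rg}=\emptyset$. We achieve this by making $r$ non-proper everywhere, or with range not covering a neighbourhood, as Katsura does.
\item[(b)] Along every edge $e$, the $Z$-coordinate satisfies $\pi_Z(d(e)) = \rho(\pi_Z(r(e)))$, up to orientation convention.
\item[(c)] The graph is minimal and contracting.
\end{enumerate}
By Proposition~\ref{prop:graph-k-theory}, (a) gives $K_*(\mathcal{O}(E)) \cong K_*(C_0(E^0))$. Since $C(Z)$ is KK-equivalent to $\C$, the Künneth theorem gives $K_*(C_0(X)\otimes C(Z)\otimes C(C)) \cong K_*(C_0(X)) \otimes C(C,\Z)$. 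This is too big, so the Cantor factor must be handled differently.

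Better: replicate Katsura's actual recipe. There, $E^0 = X \times$ (something with trivial $K$-theory relative to a point), and the edge space realises the required $K$-theory after the six-term sequence. Simply multiplying every space in his graph by $Z$ and twisting $d$ by $\rho$ preserves the $K$-theory computation: $C(Z)\sim_{KK}\C$, and by naturality the maps in Proposition~\ref{prop:graph-k-theory} become the old maps tensored with $\id_{K_*(\C)}$. For the compact case, track $[1]$ through $t^0_*$, using that $[1_{C(Z)}]$ corresponds to $[1_\C]$.

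Given $E$, Theorem~\ref{thm:graph-contracting-Kirchberg-algebra} shows $\mathcal{O}(E)$ is a UCT Kirchberg algebra. Proposition~\ref{prop:groupoid-model-for-graphs} gives $\mathcal{O}(E)\cong C^*_r(\G(\partial E,\sigma_E))$, and we set $\G = \G(\partial E,\sigma_E)$.

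Principality is the key point. An isotropy element $(x,n-m,x)$ with $n\ne m$ would require $\sigma_E^n(x)=\sigma_E^m(x)$. Since every vertex is singular, $\partial E$ contains finite paths. For a finite path $x$, $|\sigma^n(x)| = |x|-n$, which forces $n=m$. For an infinite path, the $Z$-coordinate of $r(\sigma^k x)$ equals $\rho^k(z)$. Minimality of $\rho$ on the infinite space $Z$ rules out periodic points, so $\rho^n(z)=\rho^m(z)$ forces $n=m$. Hence all isotropy is trivial.

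Dimension: $\partial E$ embeds, as a locally finite union of pieces, into products of $E^0$ and $E^1$ with countable or Cantor-type path data. Its covering dimension is therefore bounded by $\dim X + d + \text{const}$, using the countable sum theorem for closed pieces. If $X$ is compact we arrange $E^0$ compact, so $\partial E$ is compact (again in Katsura's setup) and the algebra is unital.

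The main obstacle is step (c): verifying minimality and contraction once $d$ is twisted by $\rho$. Minimality should follow by combining density of Katsura's orbits with minimality of $\rho$: the $Z$-coordinates along any negative orbit are a full $\rho$-orbit, which is dense. Contraction needs open sets $U_k$ of paths whose $d$-images cover $\overline V$. Since $\rho^n$ is a homeomorphism, we shift Katsura's contracting sets by $\rho^{n_k}$ in the $Z$-coordinate. We choose these sets small in $Z$ and cover a compact $Z$-neighbourhood using finitely many paths of differing lengths. Making all of this compatible with every vertex being singular is the delicate bookkeeping.
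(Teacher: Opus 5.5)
Your overall strategy matches the paper's: take Katsura's construction with every vertex singular, multiply by the Deeley--Putnam--Strung space $Z$, and twist one structure map by $\rho$. Then Proposition~\ref{prop:graph-k-theory} and the Künneth formula give $K_*(\mathcal{O}(E))\cong K_*(C_0(Z\times X))\cong K_*(C_0(X))$. Freeness of $\rho$, together with the length count on finite paths, makes $\G(\partial E,\sigma_E)$ principal. Those steps are fine.

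\textbf{The main gap.} You never specify $E$, so (a)--(c) remain requirements rather than verified properties. In particular, the contracting property, which you yourself flag as the main obstacle, is left unproved. It does not follow from shifting Katsura's contracting sets. Crossing $U\subseteq Z$ with one of his path sets of fixed length $n_k$ gives a $d$-image whose $Z$-part is $\rho^{-n_k}(U)$, and finitely many fixed shifts have no reason to cover $\overline{V}$ in the $Z$-direction. What is needed is the following:
\begin{itemize}
\item an $N$, chosen by minimality of $\rho^{-1}$ and compactness, with $Z=\bigcup_{k=1}^N\rho^{-(k+1)}(U)$;
\item paths of every length $2,\dots,N+1$ with range in $V$;
\item pairwise $\pitchfork$-disjoint initial segments;
\item $d$-images that are full in the $X$-direction.
\end{itemize}

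The paper obtains all of this from one concrete graph: $E^0=Z\times X$, $E^1=Z\times X\times\N$, $d(z,x,m)=(z,x)$, $r(z,x,m)=(\rho(z),x_m)$, where $(x_m)$ meets every non-empty open subset of $X$ infinitely often. With this choice:
\begin{itemize}
\item $r^{-1}(W)$ is never compact for non-empty open $W$, so $E^0_{rg}=\emptyset$.
\item $\text{Orb}^+((z,x))\supseteq\{(\rho^n(z),x_m):n,m\in\N\}$ is a product of dense sets, which gives minimality.
\item For contraction at $(z,x_1)$, take $U_k$ to be the length-$(k+1)$ paths with range in $U\times\{x_1\}$, first edge labelled $1$, all later edges labelled $k$, and arbitrary final $X$-coordinate. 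These sets are separated by the label of their second edge. Their $d$-images are $\rho^{-(k+1)}(U)\times X$, which together cover $E^0\supsetneq\overline{U\times V}$.
\end{itemize}

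\textbf{Two further steps fail as written.} For minimality, density of the $Z$- and $X$-projections of an orbit does not imply density in $Z\times X$. Also, the $Z$-coordinates along a negative orbit need not form a full $\rho$-orbit. Since all vertices are singular, the trivial path at $v$ is itself a negative orbit, so $\text{Orb}^-(v,\mu)$ can be $\{v\}$. One must argue with $\text{Orb}^+(v)$, and that is exactly where the product form of the paper's $r$ is used.

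For the dimension bound, $\partial E$ is not a locally finite union of closed pieces. $E^\infty$ is not closed; for the paper's graph it is even dense in $\partial E$. So the sum theorem for closed sets cannot be applied to $\partial E$ directly. The paper applies the sum theorem only to $E^*$, using that each $\bigcup_{k\le n}E^k$ is closed and $E^k\cong Z\times X\times\N^k$. It bounds $\dim E^\infty\le\dim Z$ via $E^\infty\cong Z\times\N^\N$. It then joins the two with the addition theorem $\dim(A\cup B)\le\dim A+\dim B+1$, which is valid because $\partial E$ is metrisable.
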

	\begin{proof}
		Fix a second-countable, locally compact, Hausdorff space $X$. Since $X$ is second-countable, we can choose a sequence $(x_i)_{i=1}^\infty$ in $X$ such that every non-empty open subset of $X$ contains infinitely many elements of $(x_i)_{i=1}^\infty$.
		Let $Z$ denote the infinite compact metric space coming from Theorem \ref{thm:DPS-Theorem} (with $d=3$). Then $C(Z)$ has the same $K$-theory as $\C$, there exists a minimal homeomorphism $\rho$ on $Z$, and $\dim(Z) \in \{2,3\}$. 
		
		Consider the topological graph $E= (E^0, E^1,d,r)$ with vertex set $E^0 = Z \times X$, edge set $E^1 = Z \times X \times \N$, and domain and range maps given by $d(z,x,m) = (z,x)$ and $r(z,x,m) = (\rho(z),x_m)$ for all $(z,x,m) \in E^1$.
		Note that Definition \ref{def:top-graph} is satisfied as $d$ is a local homeomorphism and $r$ is continuous. 
		We will show that $\mathcal{O}(E)$ is a stable UCT Kirchberg algebra with the same $K$-theory as $C_0(X)$. 
		
		We first observe that $E$ is second-countable because both $X$ and $Z$ are second-countable.
		Next, we prove that $E$ is minimal. 
		Let $(z,x) \in E^0$. Then 
		\begin{equation}
			\text{Orb}^+((z,x)) = \{(z,x)\}\cup \{(\rho^n(z),x_m) : n,m\in \N \}.
		\end{equation}
		By minimality of $\rho$ and the choice of $(x_i)_{i=1}^\infty$, it follows that $\text{Orb}^+((z,x))$ is dense in $E^0$. 
		For any negative orbit $\mu$ of $(z,x)$, $\text{Orb}^+((z,x)) \subseteq \text{Orb}((z,x),\mu)$. Hence, $\text{Orb}((z,x),\mu)$ is dense in $E^0$. Therefore, $E$ is minimal.
		
		We now prove that $E$ is contracting. Fix $z \in Z$. We shall show that $E$ is contracting at the point $(z,x_1) \in E^0$.  As in the proof of minimality above, the forward orbit of $(z,x_1)$ is dense in $E^0$, so $\overline{\text{Orb}^+((z,x_1))} = E^0$.
		Let $W \subseteq E^0$ be a neighbourhood of $(z,x_1)$. Since $Z$ and $X$ are locally compact, Hausdorff spaces, there exist relatively compact open subsets $U \subseteq Z$ and $V \subseteq X$ such that $(z,x_1)\in U \times V \subseteq \overline{U} \times \overline{V} \subsetneq W$. Note that $\overline{U \times V } = \overline{U} \times \overline{V}$  is compact and a proper subset of $Z \times X$. 
		
		The set $\bigcup^\infty_{k=1} \rho^{-(k+1)}(U)$ is non-empty, open and $\rho^{-1}$-invariant. As $\rho$ is minimal, so is $\rho^{-1}$. Hence, $\bigcup^\infty_{k=1} \rho^{-(k+1)}(U) = Z$. By compactness of $Z$, there is $N \in \N$ such that
		\begin{equation} \label{eqn:translates-of-U}
			\bigcup^N_{k=1} \rho^{-(k+1)}(U) = Z.
		\end{equation}
		For $x \in X$, $z \in U$ and $k \in \{1, \dots, N\}$, let $e_{x,z,k} \in E^{k+1}$ be the path
		\begin{equation} \label{eqn:path-def}
			((\rho^{-1}(z),x_k,1),(\rho^{-2}(z),x_k,k),  \cdots,  (\rho^{-k}(z),x_k,k),(\rho^{-(k+1)}(z),x,k)).
		\end{equation}
		Then $r(e_{x,z,k}) = r(\rho^{-1}(z),x_k,1) = (z,x_1)$ and $d(e_{x,z,k}) = d(\rho^{-(k+1)}(z),x,k) = (\rho^{-(k+1)}(z),x)$. 
		Set $U_k = \{e_{x,z,k}:  x \in X, z \in U\} \subseteq E^{k+1}$ for $k \in \{1, \dots , N\}$. 
		Note that, for each $k \in \{1, \dots , N\}$, $U_k$ is open in $E^{k+1}$ as it coincides with the intersection of $E^{k+1}$ and the open set 
		\begin{equation} 
			(\rho^{-1}(U) \times X \times \{1\}) \times \left(\prod_{i=2}^{k} \rho^{-i}(U) \times X  \times \{k\}\right)  \times  (\rho^{-(k+1)}(U) \times X \times \{k\}).
		\end{equation}
		We first compute that $r(U_k) = U \times\{ x_1 \}\subseteq U \times V$ for all $k \in \{1, \dots , N\}$. We next observe that $U_k \pitchfork U_l = \emptyset$ holds for all $k, l \in \{1, \dots , N\}$ with $k \neq l$ by \eqref{eqn:path-def}.
		Finally, by \eqref{eqn:translates-of-U}, we have 
		\begin{equation}
			\overline{U \times V } \subsetneq Z \times X =  \bigcup_{k=1}^N \rho^{-(k+1)}(U) \times X = \bigcup_{k=1}^N d(U_k).
		\end{equation}
		Hence, $U \times V$ is a contracting open set. Therefore, $E$ is a contracting topological graph.
		Since $E$ is second-countable, minimal and contracting, the topological graph C$^*$-algebra $\mathcal{O}(E)$ is a UCT Kirchberg algebra by Theorem \ref{thm:graph-contracting-Kirchberg-algebra}. 
		
		We now compute the $K$-theory of $\mathcal{O}(E)$ using Proposition~\ref{prop:graph-k-theory}.
		Note that the choice of $(x_i)_{i=1}^\infty$ ensures that $r^{-1}(V)$ is non-compact for every non-empty open subset $V \subseteq E^0$ as the projection of $r^{-1}(V)$ onto the third coordinate is an infinite subset of $\N$. Hence, $E^0_{rg}= \emptyset$.
		
		Applying Proposition \ref{prop:graph-k-theory}, the Künneth formula (\cite{schochet1982topological}) and Theorem \ref{thm:DPS-Theorem}, we obtain
		\begin{align}
			K_0(\mathcal{O}(E))  &\cong K_0(C_0(E^0)) = K_0(C_0(Z \times X)) \cong K_0(C_0( X)),\label{eqn:K0-iso}\\
			K_1(\mathcal{O}(E))  &\cong K_1(C_0(E^0)) = K_1(C_0(Z \times X)) \cong K_1(C_0( X)).\label{eqn:K1-iso}
		\end{align}
		
		Let us now turn towards the groupoid model. Let $\G = \G(\partial E, \sigma_E)$ be the Deaconu--Renault groupoid corresponding to the shift map $\sigma_E$ on the boundary path space $\partial E$ of $E$.
		By Proposition \ref{prop:groupoid-model-for-graphs}, $C^*_r(\G) \cong \mathcal{O}(E)$.
		As $E$ is second-countable, $\partial E$ is a second-countable, locally compact, Hausdorff space. Therefore, the Deaconu--Renault groupoid $\G$ is second-countable, locally compact, Hausdorff and étale. Moreover, we may identify the unit space of $\G$ with $\partial E$ (see Section \ref{subsec:DR-groupoids}).
		
		We now show that $\G$ is principal.
		Fix $\mu \in \G^{(0)} = \partial E$. Let $g \in \G$ with $r(g) = s(g) = \mu$.
		Then there exist $k,l \in \N_0$ with $\max(k,l) \leq |\mu|$ such that $\sigma_E^k(\mu) = \sigma_E^l(\mu)$ and $g=(\mu,k-l,\mu)$.
		In particular, we have $|\sigma_E^k(\mu)| = |\sigma_E^l(\mu)|$.
		If $\mu$ is a finite path, this immediately implies $k = l$. Hence, $g \in \G^{(0)}$ in this case.
		
		Suppose now that $\mu \in E^\infty$. For $\mu$ to be a valid infinite path in $E$, we must have 
		$\mu = ((\rho^{-i}(z),x_{n_{i+1}},n_i))_{i=1}^\infty$ for some $z \in Z$ and some sequence of natural numbers $(n_i)_{i=1}^\infty$. 
		Since $\sigma_E^k(\mu) = \sigma_E^l(\mu)$, it follows that $\rho^{-(k+1)}(z) = \rho^{-(l+1)}(z)$. Since $\rho$ is a minimal homeomorphism on an infinite space, the induced $\Z$-action is free. Therefore, we must have $k=l$, and so $g \in \G^{(0)}$ in this case as well. This completes the proof that $\G$ is principal. 
		
		Suppose $X$ is compact. Then $E^0 = Z \times X$ is compact. By \cite[Proposition 7.1]{katsura2006class2}, $\mathcal{O}(E)$ is unital. Hence, $C^*_r(\G) \cong \mathcal{O}(E)$ is unital, from which it follows that $\G^{(0)}$ is compact.
		Moreover, by \cite[Proposition 7.1]{katsura2006class2}, the $^*$-homomorphism $t^0:C_0(E^0) \rightarrow \mathcal{O}(E)$ is unital. Hence, the isomorphism $t^0_*:K_0(C_0(E^0)) \rightarrow K_0(\mathcal{O}(E))$ given by Proposition \ref{prop:graph-k-theory} preserves the $K_0$-class of the unit. As the isomorphism $K_0(C_0(X)) \cong K_0(C_0(Z \times X))$ is induced by tensoring with $[1_{C(Z)}]_{0}$, the isomorphism \eqref{eqn:K0-iso} preserves the $K_0$-class of the unit. Therefore, the isomorphism \eqref{eqn:ThmB-K0} preserves the $K_0$-class of the unit.

		Suppose $X$ has finite covering dimension. We shall prove that $\G^{(0)}$ has finite covering dimension. 
		As $E$ is assumed to be second-countable, $\partial E$ is metrisable and its topology is completely determined by Lemma~\ref{lem:Kumjian-Li}, which describes the convergence of sequences.
		
		As every vertex in $E$ is singular, we have $\partial E = E^\infty \cup E^*$. It is important to note that, although this is a disjoint union at the level of sets, neither $E^\infty$ nor $E^*$ will be closed in $\partial E$ in general.  
		However, it follows from Lemma~\ref{lem:Kumjian-Li} that $\bigcup_{k=0}^n E^k \subseteq \partial E$, the set of paths of length at most $n$, is closed in $\partial E$ for each $n \in \N$.
		
		To get an upper bound on the covering dimension of $\partial E$, we use \cite[Theorem 3.1.17]{engelking1978dimension}, which gives
		\begin{equation}\label{eqn:sledgehammer}
			\dim(\partial E) \leq \dim(E^\infty) + \dim(E^*)+1.
		\end{equation}
		We note that our use of \cite[Theorem 3.1.17]{engelking1978dimension} is valid since $\partial E$ is metrisable and thus hereditarily normal.
		
		As mentioned above, the infinite paths in $E$ are parameterised by the map $f: Z \times \N^\N \to E^\infty$ given by 
		\begin{equation}
			(z,(n_i)_{i=1}^\infty) \mapsto ((\rho^{-i}(z),x_{n_{i+1}},n_i))_{i=1}^\infty.
		\end{equation}
		It follows from Lemma~\ref{lem:Kumjian-Li} that $f$ is a homeomorphism. Since $\N^\N$ is zero-dimensional, we have $\dim(E^\infty) \leq \dim(Z) + 0 = \dim(Z)$ by \cite[Theorem 3.2.14]{engelking1978dimension} (which is applicable as second-countable metrisable spaces are normal and strongly paracompact.)
		
		Let $k \in \N$ with $k \geq 2$. The paths in $E$ of length $k$ are parameterised by the function $f_k: Z \times X \times \N^k \to E^k$ that maps $(z,x,(n_i)_{i=1}^k)$ to the path
		\begin{equation}
			((\rho^{-1}(z),x_{n_2},n_1),\cdots, (\rho^{-(k-1)}(z),x_{n_k},n_{k-1}),(\rho^{-k}(z),x,n_k)).
		\end{equation}
		It follows from Lemma~\ref{lem:Kumjian-Li} that $f_k$ is a homeomorphism. 
		In fact, $E^k \cong Z \times X \times \N^k$ holds for all $k \in \N_0$ since in the case $k \in \{0,1\}$ we can just use the definition of $E^1$ and $E^0$. By \cite[Theorem 3.2.14]{engelking1978dimension}, $\dim(E^k) \leq \dim(Z) + \dim(X) + 0$.
		As $\bigcup_{k=0}^n E^k \subseteq E^*$ is closed for all $n \in \N$, we may use \cite[Proposition 3.1.7]{engelking1978dimension} to obtain $\dim(E^*) \le \dim(Z) + \dim(X)$.
		
		Substituting our upper bounds for $\dim(E^\infty)$ and $\dim(E^*)$ into \eqref{eqn:sledgehammer} gives 
		\begin{equation}
			\dim(\partial E) \leq \dim(Z)+(\dim(Z) + \dim(X))+1 < \infty. 
		\end{equation}
		This completes the proof that $\G^{(0)}$ has finite covering dimension.
	\end{proof}
	
	As explained in the introduction, we can deduce Theorem \ref{thm:stableKirchberg} from Theorem \ref{thm:general} using the Kirchberg--Phillips theorem (\cite{Ki95,Ph00}).
	
	\begin{thm}[Theorem~\ref{thm:stableKirchberg}]
		Let $A$ be a stable Kirchberg algebra satisfying the UCT. Then there exists a principal, second-countable, locally compact, Hausdorff, étale groupoid $\G$ with a finite-dimensional unit space such that $A \cong C_r^*(\G)$. 
	\end{thm}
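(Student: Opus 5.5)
We deduce this from Theorem~\ref{thm:general} and the stable case of the Kirchberg--Phillips theorem, as outlined in the introduction. The first step is to find a second-countable, locally compact, Hausdorff space $X$ of finite covering dimension with $K_0(C_0(X)) \cong K_0(A)$ and $K_1(C_0(X)) \cong K_1(A)$. Both groups are countable abelian groups, since $A$ is separable.

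To build $X$, we realise each countable abelian group $G$ as $K_0$ (respectively $K_1$) of a locally compact space of low dimension. For $K_0$, take a $2$-dimensional CW-type space that is a Moore space $M(G,2)$ with a point removed: a wedge of $2$-spheres with $2$-cells attached realising relations, or, more robustly for infinitely generated $G$, a mapping-telescope construction of dimension at most $3$. Its reduced $K^0$ is $G$ and its $K^1$ vanishes. For $K_1$, do the same one degree up, using $M(G,1)$-type spaces or suspensions of these. Then take $X$ to be the disjoint union of the two pieces.

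An alternative that avoids Moore spaces of infinitely generated groups is to use free resolutions. Write $G_i$ as the cokernel of a map $\Z^{\oplus\infty} \to \Z^{\oplus\infty}$ and realise this map by a map between disjoint unions of copies of $\R^2$ (respectively $\R^3$). Take $X$ to be a mapping-cone-type open subset, and compute its $K$-theory by the six-term exact sequence. Either way, $X$ is second-countable, locally compact, Hausdorff and finite-dimensional. This is the main obstacle: the point needing care is verifying that the chosen space really has the prescribed $K$-theory for arbitrary countable (possibly torsion or non-finitely generated) groups. It is a standard, if slightly technical, topological realisation argument.

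Next, apply Theorem~\ref{thm:general} to $X$. This gives a principal, second-countable, étale groupoid $\mathcal{H}$ with finite-dimensional unit space such that $C_r^*(\mathcal{H})$ is a UCT Kirchberg algebra with the same $K$-theory as $A$. To ensure stability, stabilise by setting $\G = \mathcal{H} \times \mathcal{R}$, where $\mathcal{R} = \N\times\N$ is the discrete full equivalence relation on $\N$. Then:
\begin{itemize}
\item $\G$ is principal, second-countable, locally compact, Hausdorff and étale;
\item $\G^{(0)} = \mathcal{H}^{(0)} \times \N$ has the same finite covering dimension as $\mathcal{H}^{(0)}$;
\item $C_r^*(\G) \cong C_r^*(\mathcal{H}) \otimes \K$.
\end{itemize}
Hence $C_r^*(\G)$ is a stable UCT Kirchberg algebra, since stabilisation preserves simplicity, separability, amenability, pure infiniteness and the UCT. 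Its $K$-theory is unchanged by stability of $K$-theory.

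Finally, since $A$ and $C_r^*(\G)$ are stable UCT Kirchberg algebras with isomorphic $K_0$ and $K_1$, the Kirchberg--Phillips theorem gives $A \cong C_r^*(\G)$.

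In fact the proof of Theorem~\ref{thm:general} shows that $\mathcal{O}(E)$ is already stable when $X$ is non-compact. Even so, the stabilisation step makes the argument independent of that remark and also covers the case where the constructed $X$ happens to be compact.
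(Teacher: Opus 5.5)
Your overall argument is the paper's: find a finite-dimensional, second-countable, locally compact Hausdorff $X$ with $K_*(C_0(X))\cong K_*(A)$, apply Theorem~\ref{thm:general}, stabilise by taking the product with the full equivalence relation on $\N$, and conclude with the stable Kirchberg--Phillips theorem. The only divergence is how you obtain $X$. The paper does not build it; it quotes \cite[Corollary 23.10.3]{Bla98}, which supplies such an $X$ of covering dimension at most three.

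Of your two sketches for $X$, the Moore-space one is wrong as written.

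\begin{itemize}
\item Topological $K$-theory is contravariant, so for a compact Moore space the reduced $K$-groups are governed by $\mathrm{Hom}(G,\Z)$ and $\mathrm{Ext}(G,\Z)$, not by $G$.
\item Consequently torsion lands in the other degree. For instance, $M(\Z/n,2)=S^2\cup_n e^3$ has $\tilde K^0=0$ and $K^1\cong\Z/n$.
\item The cells realising relations are $3$-cells, not $2$-cells.
\item For infinitely generated $G$, a mapping telescope does not help. It is non-compact, so ``removing a point'' is meaningless. Moreover its $C_0$-$K$-theory is zero, because each finite stage deformation retracts onto its frontier in the telescope. Such groups would instead require inverse limits and continuity of $K$-theory.
\end{itemize}

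Your free-resolution route, by contrast, does work.

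\begin{itemize}
\item Take an injective presentation $F_1\to F_0$ of $G$ by countable free groups, with matrix entries $a_{ij}$.
\item Realise it by a $^*$-homomorphism from $C_0$ of copies of $\R^2$ indexed by a basis of $F_1$ to $C_0$ of copies indexed by a basis of $F_0$.
\item This homomorphism is induced by proper maps of degree $a_{ij}$ from disjoint open discs in the latter copies onto the former. Properness holds because each basis element of $F_1$ has finitely supported image.
\item The mapping cone is $C_0$ of a $3$-dimensional space with $K_0=0$ and $K_1\cong\mathrm{coker}=G$.
\item Using $\R^3$ in place of $\R^2$, or suspending, puts $G$ in $K_0$ at the cost of one extra dimension. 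That is harmless, since Theorem~\ref{thm:general} only needs finite dimension.
\end{itemize}

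So drop the Moore-space paragraph and keep the mapping-cone construction (or simply cite Blackadar). The remainder of your proof, including the stabilisation step, is correct.
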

	\begin{proof}
		Set $G_0 = K_0(A)$ and $G_1 = K_1(A)$.
		By \cite[Corollary 23.10.3]{Bla98}, there exists a second-countable, locally compact, Hausdorff space $X$ with covering dimension at most three such that $K_0(C_0(X)) \cong G_0$ and  $K_1(C_0(X)) \cong G_1$.
		
		Let $\G$ be the principal groupoid obtained from Theorem~\ref{thm:general} for this choice of starting space $X$. Then $C_r^*(\G)$ is a UCT Kirchberg algebra with the same $K$-theory as $C_0(X)$. As $X$ is finite-dimensional, the unit space $\G^{(0)}$ has finite covering dimension.
		
		Let $\mathcal{R}_\N$ be the complete equivalence relation on $\N$ with the discrete topology. Then $C_r^*(\mathcal{R}_\N)$ is isomorphic to the compact operators $\K(\ell^2)$. The product groupoid $\G \times \mathcal{R}_\N$ is principal as both $\G$ and $\mathcal{R}_\N$ are principal, and $(\G \times \mathcal{R}_\N)^{(0)} \cong  \G^{(0)} \times \N$ has the same covering dimension as $\G^{(0)}$. By \cite[Lemma 5.1]{barlak2017cartan},
		$C^*_r(\G \times \mathcal{R}_\N) \cong C^*_r(\G) \otimes \K(\ell^2)$.
		
		By replacing $\G$ with $\G \times \mathcal{R}_\N$, we may assume without loss of generality that $C_r^*(\G)$ is a stable UCT Kirchberg algebra with the same $K$-theory as $C_0(X)$. 
		By the stable version of the Kirchberg--Phillips theorem, $A \cong C_r^*(\G)$; see for example \cite[Theorem 8.4.1(ii)]{rordam2002classification}. 
	\end{proof}
	
	In the case of unital UCT Kirchberg algebras, it is important to keep track of the position of the class of the unit in the  $K_0$-group. Note that, if $(K_0(A), [1_A]_0) \cong (K_0(C(X)), [1_{C(X)}]_0)$ for some compact space $X$, then $[1_A]_0$ has infinite order and generates a direct summand of $K_0(A)$. Indeed, any evaluation map $C(X) \rightarrow \C$ is a left inverse to $\C 1_{C(X)} \hookrightarrow C(X)$, which forces $K_0(\C 1_{C(X)})$ to be a direct summand of $K_0(C(X))$. It turns out that this is the only restriction we require to obtain the following unital version of Theorem~\ref{thm:stableKirchberg} using the construction of Theorem~\ref{thm:general}.
	
	\begin{thm}[Theorem~\ref{thm:MostUnitalKirchberg}]
		Let $A$ be a unital Kirchberg algebra satisfying the UCT. Suppose $[1_A]_0$ has infinite order and generates a direct summand of $K_0(A)$. Then there exists a principal, second-countable, locally compact, Hausdorff, étale groupoid $\G$ with a finite-dimensional unit space such that $A \cong C_r^*(\G)$. In particular, $A$ contains a C$^*$-diagonal.
	\end{thm}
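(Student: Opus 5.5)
The plan is to reduce to Theorem~\ref{thm:general} with a suitable compact space $X$, and then apply the unital Kirchberg--Phillips theorem. Write $K_0(A) = \Z[1_A]_0 \oplus H_0$, which is possible by hypothesis, and set $G_1 = K_1(A)$. Here $H_0$ and $G_1$ are countable abelian groups. We need a compact, second-countable, finite-dimensional Hausdorff space $X$ together with isomorphisms
\[
K_0(C(X)) \cong \Z \oplus H_0 \quad\text{and}\quad K_1(C(X)) \cong G_1,
\]
under which $[1_{C(X)}]_0$ corresponds to $(1,0)$.

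To build $X$, first use \cite[Corollary 23.10.3]{Bla98}, as in the proof of Theorem~\ref{thm:stableKirchberg}. This gives a second-countable, locally compact Hausdorff space $Y$ of covering dimension at most three with $K_0(C_0(Y)) \cong H_0$ and $K_1(C_0(Y)) \cong G_1$. Now let $X = Y^+$ be the one-point compactification; if $Y$ is already compact, take $X = Y \sqcup \{*\}$ instead. The split extension
\[
0 \to C_0(Y) \to C(X) \to \C \to 0
\]
is split by the unit inclusion $\C \to C(X)$. Hence
\[
K_0(C(X)) \cong K_0(C_0(Y)) \oplus \Z \cong H_0 \oplus \Z \quad\text{and}\quad K_1(C(X)) \cong G_1,
\]
and $[1_{C(X)}]_0$ maps to the generator of the $\Z$ summand. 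The space $X$ is compact metrisable.

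The main point to check is that $\dim X < \infty$. Here $X$ is the union of the closed set $\{*\}$ and the $F_\sigma$ set $Y$, which is a countable union of compact sets of dimension at most three. By the countable sum theorem, $\dim X \le 3$.

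Apply Theorem~\ref{thm:general} to $X$. This yields a principal, second-countable, locally compact, Hausdorff, étale groupoid $\G$ with compact, finite-dimensional unit space, such that $C_r^*(\G)$ is a unital UCT Kirchberg algebra. Its $K$-theory is isomorphic to that of $C(X)$, with the $K_0$-isomorphism preserving the unit class. Composing with the isomorphisms above gives
\[
\bigl((K_0(C_r^*(\G)),[1]_0),K_1(C_r^*(\G))\bigr) \cong \bigl((K_0(A),[1_A]_0),K_1(A)\bigr).
\]
The unital Kirchberg--Phillips theorem \cite[Theorem 8.4.1]{rordam2002classification} then gives $A \cong C_r^*(\G)$. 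Finally, Theorem~\ref{thm:renault-cartan}(i) shows that $C(\G^{(0)}) \subseteq C_r^*(\G)$ is a C$^*$-diagonal, since $\G$ is principal.
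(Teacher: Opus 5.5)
Your proposal is correct and follows the paper's proof essentially step for step. You realise the complement of $\Z[1_A]_0$ and $K_1(A)$ by a space $Y$ of dimension at most three via \cite[Corollary 23.10.3]{Bla98}, pass to the one-point compactification $X$ (with $X = Y \sqcup \{*\}$ if $Y$ is compact), apply Theorem~\ref{thm:general}, and conclude with the unital Kirchberg--Phillips theorem and Theorem~\ref{thm:renault-cartan}.

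The only deviation is in the dimension bound. The paper obtains $\dim X \le \dim Y + 1$ from the addition theorem \cite[Theorem 3.1.17]{engelking1978dimension}. Your countable sum argument is valid, since $Y$ is $\sigma$-compact and compact subsets are closed in $X$, and it gives the slightly sharper bound $\dim X \le 3$.
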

	\begin{proof}
		Set $G_1 = K_1(A)$. By hypothesis, $K_0(A) \cong \Z[1_A]_0 \oplus G_0$ for some subgroup $G_0$.  
		By \cite[Corollary 23.10.3]{Bla98}, there exists a second-countable, locally compact, Hausdorff space $Y$ with covering dimension at most three such that $K_0(C_0(Y)) \cong G_0$ and  $K_1(C_0(Y)) \cong G_1$.
		Let $X$ be the one-point compactification of $Y$. Then $C(X)$ is isomorphic to the unitisation of $C_0(Y)$. (Our convention when $Y$ is already compact is that $X \cong Y \sqcup \{*\}$ and $C(X) \cong C(Y) \oplus \C$.) 
		It follows that $K_1(C(X)) \cong K_1(C_0(Y)) \cong G_1$ and
		$K_0(C(X)) = \Z[1_{C(X)}]_0 \oplus K_0(C_0(Y)) \cong \Z[1_A]_0 \oplus G_0$,
		where the isomorphism maps $[1_{C(X)}]_0$ to $[1_A]_0$. Moreover, $\dim(X) \leq \dim(Y) + 1 < \infty$ by \cite[Theorem 3.1.17]{engelking1978dimension}.
		
		Let $\G$ be the groupoid obtained from Theorem~\ref{thm:general} for this choice of starting space $X$. Then $C_r^*(\G)$ is a unital UCT Kirchberg algebra whose $K$-theory is isomorphic to that of $C(X)$ via a map that respects the class of the unit in $K_0$. Moreover, the unit space of $\G$ is finite-dimensional.
		By the unital version of the Kirchberg--Phillips theorem, $A \cong C_r^*(\G)$; see for example \cite[Theorem 8.4.1(iv)]{rordam2002classification}. 
		By Theorem~\ref{thm:renault-cartan}, $C(\G^{(0)})$ gives rise to a C$^*$-diagonal in $A$. 
	\end{proof}
	
	We now turn to the special case of the Cuntz algebra $\O_\infty$. 
	\begin{thm}[Theorem~\ref{thm:O_infinity}]
		There exists a principal, second-countable, locally compact, Hausdorff, étale groupoid $\G_\infty$ such that 
        $C_r^*(\G_\infty) \cong \O_\infty$. 
		In particular, $\O_\infty$ contains a C$^*$-diagonal. Moreover, the unit space of $\G_\infty$ is the product of a Cantor space and a Deeley--Putnam--Strung space and has dimension at most three.
	\end{thm}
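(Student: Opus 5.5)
Take $X=\{*\}$ in Theorem~\ref{thm:general}. The space $X$ is compact, second-countable and zero-dimensional, so the theorem gives a principal, second-countable, locally compact, Hausdorff, étale groupoid $\G_\infty=\G(\partial E,\sigma_E)$. Its reduced algebra $C_r^*(\G_\infty)$ is a unital UCT Kirchberg algebra with $K_0\cong K_0(\C)=\Z$, where the class of the unit goes to $1$, and $K_1\cong K_1(\C)=0$. These are exactly the invariants of $\O_\infty$: $K_0(\O_\infty)=\Z$ with $[1]_0=1$, and $K_1(\O_\infty)=0$. The unital Kirchberg--Phillips theorem \cite[Theorem 8.4.1(iv)]{rordam2002classification} therefore gives $C_r^*(\G_\infty)\cong\O_\infty$. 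Since $\G_\infty$ is principal and hence topologically principal, Theorem~\ref{thm:renault-cartan} shows that $C(\G_\infty^{(0)})$ is a C$^*$-diagonal in $\O_\infty$.

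The substance is identifying the unit space $\partial E$. With $X=\{*\}$, the graph has $E^0=Z$, $E^1=Z\times\N$, $d(z,m)=z$ and $r(z,m)=\rho(z)$. Every vertex is singular, so $\partial E=E^*\sqcup E^\infty$. A path, finite or infinite, is determined by its range $z\in Z$ together with its word of labels $(n_1,n_2,\dots)$: the edges are forced to be $(\rho^{-i}(z),n_i)$. I would define
\[
\Phi\colon \partial E\to Z\times \N^{\le\N},\qquad \mu\mapsto (r(\mu),(n_i)_{i\le|\mu|}),
\]
where $\N^{\le\N}$ is the set of finite and infinite words over $\N$. I would give $\N^{\le\N}$ the boundary-path topology of the graph with one vertex and countably many loops. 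By Lemma~\ref{lem:Kumjian-Li} this space is second-countable, compact, metrisable, totally disconnected and has no isolated points. Compactness holds because condition (iii) lets finite words converge to shorter words once the next letter escapes to infinity. Brouwer's theorem then identifies it with the Cantor space.

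The main step, and the likely obstacle, is checking that $\Phi$ is a homeomorphism by comparing the three conditions of Lemma~\ref{lem:Kumjian-Li} on both sides. Condition (i) is convergence of $r(\mu^{(n)})$ in $Z$. Condition (ii) asks for convergence of the initial segments; since $\rho$ is continuous and the labels lie in the discrete space $\N$, this is equivalent to the labels eventually agreeing together with convergence in $Z$. Condition (iii) concerns compact sets $K\subseteq E^1=Z\times\N$, and since $Z$ is compact these are contained in sets $Z\times F$ with $F$ finite. So condition (iii) on $\partial E$ becomes the same condition on labels alone. Bijectivity and continuity in both directions follow from this comparison.

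Finally, the dimension bound: $\dim(Z\times C)\le\dim Z+\dim C=\dim Z\le 3$ by \cite[Theorem 3.2.14]{engelking1978dimension}, using $\dim Z\in\{2,3\}$ for $d=3$ from Theorem~\ref{thm:DPS-Theorem}. This is sharper than the crude estimate \eqref{eqn:sledgehammer} used in the proof of Theorem~\ref{thm:general}.
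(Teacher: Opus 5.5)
Your proposal is correct and follows essentially the same route as the paper: Kirchberg--Phillips applied to the construction of Theorem~\ref{thm:general} with $X=\{*\}$, then the identification $\partial E \cong Z \times \partial F$ via the Kumjian--Li convergence criterion, where $F$ is the one-vertex graph with countably many loops and your map $\Phi$ is exactly the inverse of the paper's homeomorphism $h$. The differences are cosmetic: you use $X=\{*\}$ in Theorem~\ref{thm:general} directly rather than passing through Theorem~\ref{thm:MostUnitalKirchberg}; you sketch the Cantor property of $\partial F$ via Brouwer's characterisation, whereas the paper cites it as well known; and you prove only $\dim(\partial E)\le\dim(Z)\le 3$, which suffices for the statement, while the paper also gets equality from the closed copy of $Z$ inside the product.
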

	\begin{proof}
		As $K_0(\O_\infty) \cong \Z$ with generator $[1_{\O_\infty}]_0$, Theorem~\ref{thm:MostUnitalKirchberg} applies. In fact, since $K_1(\O_\infty) = 0$, by reviewing the proof of Theorem~\ref{thm:MostUnitalKirchberg}, we see that the groupoid $\G_\infty$ is obtained by applying the construction of Theorem~\ref{thm:general} to the one-point space  $X = \{*\}$. In this case, we can identify the unit space of the groupoid arising in the construction.
		
		Let $E$ be the topological graph arising in the proof of Theorem~\ref{thm:general}, and let $F=(\{*\},\N, n \mapsto *, n \mapsto *)$ be the standard graph algebra model for $\O_\infty$. Then $\partial E$ is homeomorphic to $Z \times \partial F$, where $Z$ is the Deeley--Putnam--Strung space used in the construction of $E$. Indeed, it follows from Lemma~\ref{lem:Kumjian-Li} that the bijection $h: Z \times \partial F \rightarrow \partial E$ given by the identity map on $Z \times F^0$, and on non-trivial paths (both finite and infinite) by $h(z,(m_i)_{i=1}^k) = ((\rho^{-i}(z),*,m_i))_{i=1}^k$,
		where $k \in \N \cup \{\infty\}$, $m_i \in \N$ and $z \in Z$, is a homeomorphism. 
		
		The space $\partial F$ is well-known to be a Cantor space (see for example \cite[Section~3.1]{brown2016purely}), so $\partial E$ is the product of a Cantor space $\partial F$ and the Deeley--Putnam--Strung space $Z$. As $\dim(\partial F) = 0$, we have $\dim(Z) \leq \dim(\partial E) \leq \dim(Z)$ by \cite[Theorems 3.1.23 and 3.2.14]{engelking1978dimension}, so $\dim(\partial E) = \dim(Z) \in \{2,3\}$. By Theorem~\ref{thm:renault-cartan}, $C(\G^{(0)}_\infty)$ gives rise to a C$^*$-diagonal in $\O_\infty$. 
	\end{proof}
	
	Finally, we come to the proof of Theorem~\ref{thm:EvenMoreUnitalKirchberg}, which combines Theorem~\ref{thm:O_infinity} with results of Xin Li in the stably finite setting (\cite{li2020every}).
	\begin{thm}[Theorem~\ref{thm:EvenMoreUnitalKirchberg}]
		Let $A$ be a unital Kirchberg algebra satisfying the UCT. Suppose $[1_A]_0$ has infinite order in $K_0(A)$. Then there exists a principal, second-countable, locally compact, Hausdorff, étale groupoid $\G$ with a finite-dimensional unit space and a twist $\Sigma$ such that $A \cong C_r^*(\G, \Sigma)$. In particular, $A$ contains a C$^*$-diagonal.
	\end{thm}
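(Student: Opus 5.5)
The plan is to realise $A$ as $B \otimes \O_\infty$ for a suitable stably finite, unital, classifiable C$^*$-algebra $B$, and then to take the product of a twisted principal groupoid model of $B$ (from Xin Li, \cite{li2020every}) with the groupoid $\G_\infty$ of Theorem~\ref{thm:O_infinity}.

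\textbf{Step 1: choose $B$.} Set $G_0 = K_0(A)$, $g = [1_A]_0$ and $G_1 = K_1(A)$. Since $g$ has infinite order, there is a homomorphism $\varphi: G_0 \to \mathbb{Q}$ with $\varphi(g) = 1$: define it on $\Z g$ and extend using injectivity of $\mathbb{Q}$. Equip $G_0$ with the positive cone $\{0\} \cup \{h : \varphi(h) > 0\}$. This makes $(G_0, g)$ a simple, weakly unperforated ordered group with order unit $g$ and a single state.

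By the range results in the Elliott programme, there is a simple, separable, unital, nuclear, $\mathcal{Z}$-stable, stably finite C$^*$-algebra $B$ in the UCT class (e.g.\ an ASH algebra of the kind considered by Elliott) with
\[
(K_0(B), K_0(B)_+, [1_B]_0) \cong (G_0, G_0^+, g), \qquad K_1(B) \cong G_1,
\]
and a unique trace. This is exactly the class covered by \cite{li2020every}, so $B$ has a Cartan subalgebra. Equivalently, there is a twisted groupoid $(\G_B, \Sigma_B)$ with $\G_B$ principal, second-countable, étale, with finite-dimensional unit space, and $B \cong C_r^*(\G_B, \Sigma_B)$. I would check in \cite{li2020every} that principality and finite dimensionality of the unit space are part of the output; I expect Li's inductive-limit construction gives C$^*$-diagonals with finite-dimensional spectrum. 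This bookkeeping is the main obstacle: matching the invariant to Li's hypotheses and extracting the finite-dimensionality.

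\textbf{Step 2: identify $A$ with $B \otimes \O_\infty$.} The algebra $B \otimes \O_\infty$ is a unital UCT Kirchberg algebra. By the Künneth formula and $K_*(\O_\infty) = K_*(\C)$, it has $K$-theory $(G_0, g, G_1)$, with the unit class preserved. The unital Kirchberg--Phillips theorem \cite[Theorem 8.4.1(iv)]{rordam2002classification} then gives $A \cong B \otimes \O_\infty$.

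\textbf{Step 3: build the product groupoid.} Set $\G = \G_B \times \G_\infty$ with twist $\Sigma = \Sigma_B \times \G_\infty$, meaning the pullback of $\Sigma_B$ along the projection onto the first factor. Then:
\begin{enumerate}
\item $\G$ is principal, second-countable, locally compact, Hausdorff and étale, since both factors are.
\item Its unit space $\G_B^{(0)} \times \G_\infty^{(0)}$ is finite-dimensional by \cite[Theorem 3.2.14]{engelking1978dimension}.
\item $C_r^*(\G, \Sigma) \cong C_r^*(\G_B, \Sigma_B) \otimes C_r^*(\G_\infty) \cong B \otimes \O_\infty \cong A$.
\end{enumerate}
The tensor product identification in (3) follows as in \cite[Lemma 5.1]{barlak2017cartan}. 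Alternatively, it is a standard fact that the tensor product of two Cartan inclusions is a Cartan inclusion whose groupoid is the product; here nuclearity removes any ambiguity in the choice of tensor norm.

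Finally, Theorem~\ref{thm:renault-cartan} shows that $C_0(\G^{(0)})$ is a C$^*$-diagonal in $A$, because $\G$ is principal.
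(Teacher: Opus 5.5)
Your proposal is correct and follows essentially the same route as the paper: build a stably finite classifiable $B$ with the same unital $K$-theory and a single trace, and take Li's twisted principal groupoid model for $B$. The principality and finite-dimensionality of its unit space, which you flagged as bookkeeping to check, are supplied by \cite[Theorem 1.2]{li2020every} and \cite[Corollary 1.8]{li2020every}. You then identify $A \cong B \otimes \O_\infty$ via Kirchberg--Phillips and pass to the product with $\G_\infty$, exactly as the paper does.

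The only difference is cosmetic. You obtain the state through a $\mathbb{Q}$-valued extension using injectivity of $\mathbb{Q}$, whereas the paper uses an $\R$-valued functional on $G_0 \otimes_{\Z} \R$; both work.
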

	\begin{proof}
		Let $G_0 = K_0(A)$ and $u = [1_A]_0 \in G_0$. Since $u$ has infinite order, there exists a group homomorphism $\phi:G_0 \rightarrow \R$ satisfying $\phi(u) = 1$; for example, map $G_0$ into the $\R$-vector space $G_0 \otimes_{\Z} \R$, and use a basis to construct the required linear functional on $G_0 \otimes_{\Z} \R$. Set $G_0^+ = \{g \in G_0: \phi(g) > 0\} \cup \{0\}$. Then $(G_0,G_0^+,u)$ is a weakly unperforated, simple, scale-ordered, countable, abelian group. Let $G_1 = K_1(A)$. Then $G_1$ is a countable abelian group. Let $T = \{*\}$ be the one-point simplex and let $r:T \rightarrow S(G_0)$ be the pairing map given by $r(*) = \phi$. Then $(G_0,G_0^+,u, G_1,T,r)$ is a valid Elliott invariant.
		
		By the inductive limit construction of \cite{li2020every} (which has its origins in \cite{El96}), there exists a stably finite, simple, separable, amenable, unital, $\mathcal{Z}$-stable C$^*$-algebra $B$ satisfying the UCT such that $(K_0(B), K_0(B)^+, [1_B]_0) \cong (G_0, G_0^+,u)$, $K_1(B) \cong G_1$ and $T(B) \cong T$ (and the pairing between $K_0(B)$ and $T(B)$ induces $r$).
		By \cite[Theorem 1.2]{li2020every}, there exists a principal, second-countable, locally compact, Hausdorff, étale groupoid $\G_{B}$ and a twist $\Sigma_B$ such that $B \cong C_r^*(\G_B, \Sigma_B)$. Moreover, the unit space of $\G_{B}$ has finite dimension by \cite[Corollary 1.8]{li2020every}. 
		
		Let $\G_\infty$ be the principal groupoid model for $\O_\infty$ coming from Theorem~\ref{thm:O_infinity}. Set $\G = \G_B \times \G_\infty$. Let $\Sigma$ be the product of the twist $\Sigma_B$ and the trivial twist $\Sigma_\infty = \mathbb{T} \times \G_\infty$ as defined in \cite[Section 5]{barlak2017cartan}. Then $C_r^*(\G, \Sigma) \cong  C_r^*(\G_B, \Sigma_B) \otimes C_r^*(\G_\infty) \cong B \otimes \O_\infty$ by \cite[Lemma 5.1]{barlak2017cartan} and Theorem~\ref{thm:O_infinity}. Since $B$ is simple, separable, amenable, unital and satisfies the UCT, the tensor product $B \otimes \O_\infty$ is a unital UCT Kirchberg algebra. 
		Moreover, tensoring with $\O_\infty$ preserves $K$-theory, including the position of the class of the unit in $K_0$, thanks to the Künneth formula.
		By the unital version of the Kirchberg--Phillips theorem, $A \cong B \otimes \O_\infty \cong C_r^*(\G, \Sigma)$.
		By Theorem~\ref{thm:renault-cartan}, $C(\G^{(0)})$ gives rise to a C$^*$-diagonal in $A$. 
	\end{proof}

	We end the paper with a couple of further remarks about the unital case.
	\begin{rmk}
		The twist in Theorem~\ref{thm:EvenMoreUnitalKirchberg} will be trivial whenever there is an untwisted principal groupoid model for the stably finite classifiable C$^*$-algebra $B$ with the same $K$-theory. In particular, this holds whenever $K_0(A)$ is torsion-free by \cite[Corollary 1.8(i)]{li2020every}.
	\end{rmk}
	
	\begin{rmk}
		Every unital UCT Kirchberg algebra $A$ is isomorphic to the corner $(1_A \otimes e_{11})(A \otimes \K(\ell^2))(1_A \otimes e_{11})$ of its stabilisation. One can apply Theorem \ref{thm:stableKirchberg} to obtain a principal groupoid model $\G$ for $A \otimes \K(\ell^2)$. If the $K_0$-class of $1_A \otimes e_{11}$ can be realised by a clopen subset $V$ of the unit space of $\G$, then the reduction of the groupoid $\G$ to the clopen subset $V$ will be a principal groupoid model for $A$. However, the construction of Theorem~\ref{thm:general} does not guarantee that all elements of $K_0(\G)$ are realised in this way, since we are identifying $K_0(\G)$ with the $K$-theory of a topological space.  
	\end{rmk}

\end{document}